\documentclass[12pt,fleqn,leqno]{article}

\usepackage[english]{babel}

\usepackage[letterpaper,top=1in,bottom=1in,left=1in,right=1in]{geometry}

\usepackage{amsmath}
\usepackage{amssymb}
\usepackage{graphicx}
\usepackage{amsthm}
\usepackage{bm}
\usepackage{array}
\usepackage{overpic}
\usepackage{enumitem}
\usepackage[colorlinks=true, allcolors=blue]{hyperref}
\usepackage{cleveref}
\usepackage{comment}
\usepackage{subcaption}
\usepackage{wasysym} 

\newcommand{\R}{\mathbb R}
\newcommand{\C}{\mathbb C}
\newcommand{\E}{\mathbb E}
\newcommand{\cala}{\mathcal A}
\newcommand{\cald}{\mathcal D}
\newcommand{\calf}{\mathcal F}

\newcommand{\calk}{\mathcal K}
\newcommand{\caln}{\mathcal N}
\newcommand{\calo}{\mathcal O}
\newcommand{\calr}{\mathcal R}
\newcommand{\calv}{\mathcal V}
\newcommand{\calx}{\mathcal X}

\newcommand{\eps}{\varepsilon}
\newtheorem{theorem}{Theorem}[section]
\newtheorem{proposition}[theorem]{Proposition}
\newtheorem{corollary}[theorem]{Corollary}

\newtheorem{remark}[theorem]{Remark}
\newtheorem{example}[theorem]{Example}
\newcommand{\mtxa}[2]{\left[\!\!
 \begin{array}{#1} #2 \end{array} \!\! \right]}

\newcommand{\wh}{\widehat}
\newcommand{\wt}{\widetilde}

\DeclareMathOperator*{\argmin}{arg\,min}

\usepackage{xcolor}

\renewenvironment{abstract}
 {\small
 \begin{center}
 \bfseries \abstractname\vspace{-.5em}\vspace{0pt}
 \end{center}
 \list{}{%
 \setlength{\leftmargin}{5mm}% <---------- CHANGE HERE
 \setlength{\rightmargin}{\leftmargin}%
 }%
 \item\relax}
 {\endlist}

\title{Transpose-free linear algebra\footnote{Date \today}}
\author{Diana Halikias\footnote{Courant Institute, New York University, New York, NY 10012, USA, \texttt{diana.halikias@nyu.edu}.} \and
Michiel E.~Hochstenbach\footnote{Department of Mathematics and Computer Science, TU Eindhoven, The Netherlands, \texttt{m.e.hochstenbach@tue.nl}.} \and Alex Townsend\footnote{Mathematics Department, Cornell University, Ithaca, NY 14853-4201, USA, \texttt{townsend@cornell.edu}.}}
\date{}

\begin{document}
\maketitle

\begin{abstract}
We study the limitations of matrix-free algorithms that access a matrix $A$ only through forward matrix-vector products (matvecs) $x \mapsto Ax$, without access to the transpose $A^\top$ or its action. This setting arises naturally in operator learning, inverse problems, and matrix-free PDE solvers, where adjoint evaluations may be unavailable or prohibitively expensive. We show that the lack of transpose access creates severe and sometimes insurmountable theoretical barriers. For Krylov methods, we prove that the sequence of projected operator norms produced by Arnoldi iteration can follow any prescribed nondecreasing curve, showing that forward matvecs alone provide essentially no reliable information about the spectral norm. For several core problems, including least squares, norm estimation, column subset selection, and local maximum volume, we establish non-identifiability results; distinct matrices can generate identical forward-query transcripts while having fundamentally different solutions. We also prove quantitative lower bounds on the number of forward matvecs required for approximation tasks. In particular, any algorithm that computes a near-optimal rank-$k$ approximation must use at least $n$ queries, and estimating the Frobenius norm to relative accuracy $\eps$ requires $\Omega(\eps^{-2})$ queries when $n$ is sufficiently large, matching the complexity of Hutchinson-type estimators up to constants. Although some problems remain solvable without transpose access, the transpose-free setting is fundamentally more limited in both identifiability and efficiency.
\end{abstract}

\section{Introduction} \label{sec:intro}
In modern numerical linear algebra, it is common to treat a matrix $A$ as a black box that can be queried through matrix-vector products. This matrix-free viewpoint underpins Krylov subspace methods, randomized linear algebra, and large-scale scientific computing more broadly, especially when forming or storing $A$ explicitly is infeasible. In many applications, however, access to the transpose or adjoint operator is unavailable, inaccurate, or significantly more expensive than forward evaluations. This occurs in matrix-free discretizations of differential equations~\cite[p.~52]{ChanDePillisVdV98}, operator learning from input-output data~\cite{boulle2024operator,boulle2024mathematical}, matrix recovery~\cite{halikias2024structured,SunWoodruffYangZhang:2021}, and inverse problems in which the adjoint corresponds to a different physical model. These examples raise a basic question: what can be computed about a matrix when one has access only to the map $x \mapsto Ax$, but not to $y \mapsto A^\top y$?

We study this question in a query model. The matrix $A \in \R^{m \times n}$ is accessed through an oracle that returns $Ax$ for any query vector $x \in \R^n$. An algorithm may choose queries $x_1,\dots,x_q$ adaptively, with each $x_i$ depending on previous queries and responses and possibly on internal randomness. After observing the transcript $(x_1,Ax_1),\dots,(x_q,Ax_q)$, the algorithm outputs an estimate of a target quantity, such as a norm, a singular vector, or a low-rank approximation. We call an algorithm \emph{transpose-free} (TF) if it accesses $A$ only through such forward queries and has no access to $A^\top$ or to the entries of $A$ itself. Our main object of study is the number of forward queries needed to solve a given problem to prescribed accuracy and success probability. This setup is intentionally minimal: we do not restrict the algorithmic paradigm, the amount of internal computation, or the adaptivity of the queries. The only resource we measure is access to the forward oracle.

This model lets us separate two distinct issues. The first is \emph{identifiability}: whether an efficient set of forward queries uniquely determines the quantity of interest at all. The second is \emph{query complexity}: how many forward matvecs are required to solve a problem to a given accuracy. This distinction turns out to be useful throughout the paper. Many problems fail at the level of identifiability; the same forward information is consistent with multiple incompatible answers arising from different, indistinguishable matrices. For some problems, we prove a query complexity lower bound showing that regardless of the transcript, a problem may be solvable with only forward matvecs, but only at a query cost that is substantially larger than what is possible when both $A$ and $A^\top$ are available.\footnote{We consider algorithms that use $\mathcal O(n)$ forward queries, where $n$ is the number of columns in the matrix, to be  inefficient. This contrasts with the  limited memory setting, which requires $\Omega(n)$ matvecs~\cite{BLW25}.} Our main message is that TF access is fundamentally limited in what can be computed efficiently compared to the standard matrix-free access setting with both $A$ and $A^\top$ available.

Our results fall into three categories. First, we prove non-identifiability results for several  problems, including least squares, maximum volume, and aspects of spectral estimation. Given a ground-truth matrix $A$ and a transcript, we construct an adversarial matrix $B$ with an identical transcript, but yielding a very different solution. These are strong impossibility results in the sense that no amount of computation can recover information that is absent from the transcript. Second, we prove lower bounds on TF query complexity, independent of the transcript. In particular, a near-optimal rank-$k$ approximation of an $n \times n$ matrix requires $n$ forward queries, and estimating the Frobenius norm to relative accuracy $\eps$ requires $\Omega(\eps^{-2})$ forward queries. Third, we show that TF Krylov information can be highly misleading. The projected norms produced by Arnoldi can follow any prescribed nondecreasing curve, so these quantities do not reliably track the spectral norm. These three types of results do not immediately imply one another, and indeed  sometimes yield very different lower bounds. Thus, they each provide different qualitative descriptions of the limitations of the TF setting. 

TF computation has a long history in numerical linear algebra, especially in the development of Krylov methods.
 Much of the existing literature focuses on algorithm design under one-sided access. Our goal here is complementary. Instead of proposing yet another TF algorithm, we ask what can and cannot be inferred from one-sided information in the first place, and how expensive successful inference must be when it is possible.
 
The rest of the paper is organized as follows. In~\Cref{sec:review}, we review some existing algorithms and theoretical results for TF linear algebra. In~\Cref{sec:background}, we describe the techniques used to prove our original results in the TF setting. In \Cref{sec:spectral_norm_est}, we study TF approximation of the spectral norm, showing that using Krylov queries, projected norms can follow arbitrary nondecreasing curves in~\Cref{sec:krylov} and that the solution is not identifiable unless one uses $n$ queries in~\Cref{sec:spectral_norm_identifiability}. In \Cref{sec:low-rank}, we prove a sharp lower bound on the number of queries needed for near-optimal low-rank approximation and show that the set of columns spanning the range of the matrix is not identifiable unless one uses $n$ queries.  In \Cref{sec:css}, we establish non-identifiability for local maximum volume. In \Cref{sec:schatten}, we study Schatten-norm estimation, and prove a query complexity lower bound for estimating the Frobenius norm, resulting in a rank-one corollary for all Schatten norms. In \Cref{sec:least-squares}, we prove a non-identifiability result for overdetermined least squares problems with orthonormal design matrices. We conclude with open problems in \Cref{sec:conclusions}.

\medskip\noindent
{\bf Notation:} Unless otherwise specified, $\|\cdot\|$ denotes the spectral norm for matrices and the Euclidean norm for vectors. The Frobenius norm is denoted by $\|\cdot\|_F$.
The matrix $A$ is $m \times n$ or $n \times n$, depending on the problem.
We denote by $A_k$ the best rank-$k$ approximation to $A$ given by the Eckart--Young theorem.
The matrix trace is denoted by $\operatorname{tr}(A)$. 
The identity matrix of appropriate dimension is denoted by $I$, and its $i$th column by $e_i$.

\subsection{Main contributions}
\noindent We now describe our main technical contributions. We summarize our theoretical contributions, along with existing results, in~\Cref{tab:comparison}.
\begin{enumerate}[leftmargin=*]
 \item \textbf{TF Krylov information can be arbitrarily misleading.} In \Cref{prop:svd}, we show that for any prescribed positive nondecreasing sequence $0 < \sigma_1 \le \cdots \le \sigma_n$, there exists a matrix $A$ for which the Arnoldi projected norms satisfy $\|H_{k+1,k}\| = \sigma_k$ for $1 \le k \le n-1$, while $\|A\| = \sigma_n$. Thus, the projected norms seen by a TF Krylov method may not reveal any meaningful information about the spectral norm. In \Cref{prop:eig}, we strengthen this by showing that the same pathology can occur even after fixing the eigenvalues of $A$.
 \item \textbf{Near-optimal low-rank approximation is matvec-expensive without the transpose.} \Cref{thm:lra-lower-bound} shows that any randomized, possibly adaptive TF algorithm that returns a $(1+\eps)$-optimal rank-$k$ approximation for every $n \times n$ input matrix must use $n$ forward matvecs. In particular, one cannot obtain a genuinely matrix-free low-rank approximation algorithm from forward products alone.
 \item \textbf{Norm estimation exhibits sharp TF lower bounds.} For Frobenius norm estimation, \Cref{thm:frobenius-lower-bound} proves that relative-error estimation within $\eps$ requires $\Omega(\eps^{-2})$ forward queries, for sufficiently large $n$, matching Hutchinson-type methods up to constants and ruling out Hutch++-type $\calo(\eps^{-1})$ improvements in the TF model. In particular, Hutchinson's estimator for Frobenius norm estimation is optimal among TF methods. As an immediate consequence, \Cref{cor:schatten-rank-one} gives the same lower bound for estimating any Schatten norm, as all Schatten norms coincide on rank-one matrices.
 \item \textbf{Several basic problems are not identifiable from one-sided transcripts.} In \Cref{prop:css_identifiability}, we show that  a  set of columns spanning the range of the matrix is not identifiable without $n$ forward matvecs. In \Cref{prop:css-near-local-maxvol-nonidentifiability}, we prove that a solution to the local maximum volume problem for an $m \times n$ matrix is also not determined by fewer than $n$ forward matvecs. In \Cref{prop:ls-nonuniqueness}, we prove a non-identifiability result for overdetermined least squares problems with orthonormal  matrices, even when the adversarial matrix has condition number 1. Finally,  a simple adversarial construction in~\Cref{prop:spectral_norm_identifiability} establishes non-identifiability for  norm estimation. 
 \item \textbf{A review of the TF literature.}
While TF methods have existed for a long time, the present work provides a unified perspective on this literature. Beginning with the historical origins and motivations for the TF setting, we discuss methods for large-scale linear systems, ill-posed problems, and  model reduction in \Cref{sec:review}. We conclude by describing operator learning, as well as the closely related ``unmatched transpose'' setting. 
\end{enumerate}

\newcolumntype{L}[1]{>{\raggedright\arraybackslash}p{#1}}

\begin{table}[htbp]
\centering
\footnotesize
\renewcommand{\arraystretch}{1.2}

\begin{tabular}{L{3.5cm} L{6.3cm} L{5cm}}
\hline
Problem & Transpose-free results & With $A$ and $A^\top$ \\
\hline

Spectral norm estimation &
\begin{itemize}[leftmargin=*, nosep] \vspace{-2mm}
\item Arbitrary projected-norm curves using Krylov inputs (Props.~\ref{prop:svd}, \ref{prop:eig})
\item $n$ queries for identifiability (Prop.~\ref{prop:spectral_norm_identifiability})
\end{itemize}
&
Standard Krylov estimators, randomized methods \\ \\

Rank-$k$ approximation (constant-factor or $(1 + \eps)$-optimal)&
\begin{itemize}[leftmargin=*, nosep] \vspace{-3mm}
\item $n$ query complexity (Thm.~\ref{thm:lra-lower-bound})
\item $n$ queries for identifiability~\cite[Lem.~2.3]{halikias2025structured}
\end{itemize}
&
Efficient randomized methods: $\calo(k \, \eps^{-1})$ queries \\ \\

Rank-$k$ approximation via column subset selection &
\begin{itemize}[leftmargin=*, nosep] \vspace{-3mm}
\item cannot form $CC^\dagger$ approximation without the transpose
\item $n$ queries for identifiability of  $k$ cols. of $A$ that span $\text{col}(A)$ (Prop. \ref{prop:css_identifiability})
\end{itemize}
&
Standard column access from queries with $A$ or randomized queries; project $A$ onto the subspace spanned by the chosen columns by querying $A^\top$. \\ \\

Frobenius / Schatten norm estimation &
\begin{itemize}[leftmargin=*, nosep] \vspace{-3mm}
\item $\Omega(\eps^{-2})$ query complexity for $n$ sufficiently large (Thm.~\ref{thm:frobenius-lower-bound}, Cor.~\ref{cor:schatten-rank-one}); matches complexity of Hutchinson's estimator for $\|A\|_F^2$ 
\item $n$ queries for identifiability (Prop.~\ref{prop:spectral_norm_identifiability})
\item unbiased estimator for $\|A\|_{S^{2p}}$~\cite{kongvaliantspectrum2017}
\item $\Omega(n^{1 - 4/p})$ non-adaptive queries for even $p > 4$~\cite{linguyenwoodruffon2014}
\end{itemize}
&
Hutch++-type improvements for $\|A\|_F$: $\calo(\eps^{-1})$ queries \\ \\

Local maxvol &
\begin{itemize}[leftmargin=*, nosep] \vspace{-3mm}
\item $n$ queries for identifiability (Prop.~\ref{prop:css-near-local-maxvol-nonidentifiability})
\end{itemize}
&
Standard column access \\ \\

Least squares &
\begin{itemize}[leftmargin=*, nosep] \vspace{-2mm}
\item $n$ query complexity \cite[Thm.~1.2]{derezinski2026matrix}
\item $n$ queries for identifiability (Prop.~\ref{prop:ls-nonuniqueness})
\item GMRES convergence curve results~\cite{GPS96}
\end{itemize}
&
$Q^\top b$ available; only one query needed\\ \hline
\end{tabular}
\caption{Summary of some existing results on TF barriers and the contributions in this work. }
\label{tab:comparison}
\end{table}

\section{A brief review of transpose-free methods}
\label{sec:review}
Many classical algorithms for nonsymmetric problems, such as bi-Lanczos-based methods and their descendants, rely on access to both $A$ and $A^\top$. By contrast, Arnoldi-based methods such as GMRES~\cite{SaadSchultz86} use only forward products. A substantial literature from the late 1980s and 1990s sought to remove transpose access while retaining short recurrences, leading to methods such as CGS~\cite{Sonneveld89}, BiCGSTAB~\cite{vanderVorst92}, and TFQMR~\cite{Fre93}, as well as TF reformulations based on polynomial and squaring techniques~\cite{ChanDePillisVdV98}. The broader literature also contains ``anything is possible'' pathology results for Krylov methods, most famously for GMRES~\cite{GPS96}; see also \cite{DTe17,Sch16,TMe12,VLa11}. For spectral norm estimation, however, standard algorithms typically exploit the transpose, e.g., via Lanczos bidiagonalization or Krylov--Schur methods applied to $A^\top \! A$~\cite{BRe05,Sto12}, and even rough estimators often use iterations with both $A$ and $A^\top$. More recently, the TF setting has appeared in operator learning and data-defined operators~\cite{boulle2024operator,boulle2024mathematical}. In this section, we give a short review of TF algorithms and theory in numerical linear algebra. 

\subsection{Motivations for TF methods} \label{sec:motivation}
Traditionally, there are three contexts where TF methods are especially useful~\cite{ChanDePillisVdV98}. First, for the solution of nonlinear equations $F(x) = 0$, with $F: \R^n \to \R^n$, the action of the Jacobian $x \mapsto DF(a) \, x$ is often used in inner iterations for the solution of a large-scale linear system, as in \emph{Newton--Krylov methods}.
This action is then  approximated by a difference quotient $\delta^{-1} \, (F(a+\delta \, x)-F(a))$ for a small $\delta$ (typically, $10^{-8}$), while there is no similar expression for the transpose.
Second, the action of the forward operator may be implemented in a routine or software package, such as in integral equations \cite{ChanDePillisVdV98} or computed tomography (CT) \cite{EHa18}.
Finally, TF methods are natural for popular matrix storage types (e.g., compressed row format) which enable efficient matrix-vector products. The transpose may be  much more expensive, or require twice the storage (a compressed column format).

\subsection{Linear systems}
Consider the solution of a nonsingular linear system $Ax = b$, where $A \in \R^{n \times n}$.
A  variety of Krylov methods (see, e.g., \cite{templates94} for an overview) 
 search for an approximate solution in the Krylov spaces $\calk_k(A,b) := {\rm span}\{b, Ab, \dots, A^{k-1}b\}$.
Some methods are explicitly designed for symmetric $A$, such as CG, MINRES, and SYMMLQ.
Some methods for nonsymmetric $A$, such as GMRES and FOM, do not need the action of the transpose,  at the price of storing  basis vectors and (re)orthogonalization against  those vectors. 
Two-sided methods, like BiCG, use the transpose.
Short recurrences enable reduction in both the storage of basis vectors and  (re)orthogonalization costs.
To exploit a shadow Krylov space $\calk_k(A^\top, c)$ for a shadow vector $c$, these methods use vectors of the form $\psi(A^\top) \, c$, where $\psi$ is a degree-$k$ polynomial.
 However, the convergence of BiCG is typically quite irregular and non-monotonic.

CGS, BiCGSTAB, and the TFQMR methods seek  to improve on the BiCG approach, first by stabilizing the convergence. These methods are also suited to avoiding transposes.
The essential trick  is to access these vectors only through inner products, as 
$(\psi(A^\top) \, c, \ \varphi(A) \, b) = (c, \ \psi(A) \, \varphi(A) \, b)$.  Rather than working with a residual and its conjugate (as in BiCG), CGS uses  a squared residual, which also avoids  the transpose.
TFQMR is  related to CGS and has the advantage of avoiding irregular convergence behavior by quasi-minimizing a residual norm.
Both BiCGSTAB and TFQMR are widely used  methods  in science and engineering.\footnote{In fact, \cite{vanderVorst92} was the most cited math paper of the 1990s.}

Various other methods following a similar strategy have been proposed.
BiCGSTAB($\ell$) \cite{SFo93} uses polynomials of degree $\ell$ instead of one for the shadow residual to get an even smoother convergence.
A lesser-known variant is QMRCGSTAB \cite{CHS94}, which is a QMR type adaptation of BiCGSTAB, in the same way that TFQMR is of CGS.
Some other variations are discussed in \cite{BRZ98}.
A relatively recent TF linear solver, based on an idea of the 1980s, is IDR($s$) \cite{SGi09}.
On two-sided methods, it has been shown in 
\cite{FMa84} that in a certain class of methods exploiting a three-term recurrence, none minimizes the residual norm; see also \cite{DTT98}.
A review on developments of Krylov methods in the 21st century can be found in \cite{SSz07}.

TF methods have been proposed for several generalized problems. For the block case $AX = B$, where $B$ comprises many right-hand sides, \cite{SGa95} presents a hybrid TF block method, which is claimed to use less memory and perform better than a TF block GMRES method.
The TFQMR method can also be elegantly used for families of shifted linear systems \cite{Fre93b}.

Finally, the computation of matrix functions,  $x = f(A) \, b$, may be viewed as a generalization of linear systems, which are obtained by  $f(A) = A^{-1}$.
Several aspects of TF subspace extraction methods are compared with those of two-sided approaches (using $A^\top$) in \cite{HHo05}.

In the realm of TF lower bounds for linear systems,~\cite{derezinski2026matrix} recently established a query complexity lower bound of $n$ forward queries. In the present work, we establish an identifiability lower bound of $n$ forward queries for orthonormal  least squares problems.

\subsection{Linear ill-posed problems}
Linear ill-posed problems are of the form $Ax \approx b$, with $A \in \R^{m \times n}$, where all three situations $m=n$, $m > n$, and $m<n$ occur frequently in practice, and $A$ is (near) singular.
To get sensible approximate solutions, it is common to add a regularization term; a popular alternative is to use an iterative method as regularization, by stopping early enough in the process.
The iteration number $k$ then acts as a regularization parameter.

For square problems, both the TF method GMRES and the non-TF approach LSQR are popular.
There is no single best method for all problems. However, as reported by experts and practitioners, the method of choice for a wide range of problems is  LSQR \cite{Han25}.
One heuristic reason for this is that small singular values play an important role for these problems and are approximated well by  LSQR Krylov spaces of the form $\calk_k(A^\top \! A, \, A^\top b)$, while GMRES Krylov spaces $\calk_k(A, b)$ by nature target eigenvalues.
Indeed,  the fact that the default method, LSQR, queries $A^\top$ is in line with the general message of this paper.

For the overdetermined case ($m > n$), only methods involving Krylov spaces with $A^\top$ (such as LSQR, LSMR, CGLS, CGNR, CGNE, and SYMMLQ), seem to be natural options.
An alternative to iterative methods as a regularizer is to add a regularization term involving a 2-norm, 1-norm, or other $p$-norm.
For these approaches, flexible Krylov methods are popular, including a variable right preconditioner to obtain the desired norm regularization.
The main underlying mechanisms are flexible Arnoldi (which is TF) and flexible Golub--Kahan (Lanczos) bidiagonalization (exploiting $A^\top$); see, e.g., \cite{CGa19}.

\subsection{Model reduction of LTI dynamical systems}
A standard form of a (single-input, single-output) linear time-invariant (LTI) dynamical system is
$\dot x(t) = A \, x(t) + b \, u(t)$, \ $y(t) = c^\top x(t)$.
Here $b$, $c \in \R^n$, $u: \R \to \R$ is the input function, and $y: \R \to \R$ is the output function.
For sufficiently large $|s|$ so that the series converges, the associated transfer function has the expansion
\begin{equation} \label{eq:mor1}
H(s) = c^\top (s\,I-A)^{-1} \, b = s^{-1} \, (c^\top \,b) + s^{-2} \, (c^\top \!A\,b) + s^{-3} \, (c^\top \!A^2\,b) + \cdots
\end{equation}
Here, $s \in i \, \R$ indicates the frequency.
The terms $s^{-j} \, c^\top \!A^{j-1}b$ are the \textit{moments}.
Since $n$ is often large, dimension reduction is desirable; see, e.g., \cite{Ant05} for an overview of the contents of this subsection.
A breakthrough of the 1990s \cite{FFr95} shows that an accurate reduction of the transfer function can be obtained by projecting onto Krylov spaces, rather than explicitly computing moments.
One default technique is to use two-sided projections, including the transpose.
Let the columns of $V$ and $W$ span $\calk_k(A, b)$ and $\calk_k(A^\top, c)$, respectively.
For simplicity,  we assume that the bases are biorthogonal ($W^\top V = I$); minor adaptations are needed in case both $V$ and $W$ have orthonormal columns.
Then, the reduced transfer function
\[
\wh H(s) = (c^\top V) \, (s\,I-W^\top\!AV)^{-1} \, W^\top b
\]
matches the first $2k$ moments in the expansion of the transfer function.

An alternative to this two-sided approach is a TF one-sided projection, using only $V$ with orthonormal columns (so $V^\top V = I$).
The corresponding approximation
\begin{equation} \label{eq:mor2}
\wt H(s) = (c^\top V) \, (s\,I-V^\top\!AV)^{-1} \, V^\top b 
\end{equation}
matches only $k$ moments. This usually means that the approximation is slightly worse, especially for the higher frequencies.
However, such a TF approach may have other advantages over a two-sided scheme.
Most notably, a one-sided projection \eqref{eq:mor2} preserves stability if the field of values $\calf(A) := \{ \, x^\top Ax \, : \, x \in \R^n, \, \|x\| = 1 \, \}$ is located in the open left-half plane, since this implies that this also holds for the spectrum of $V^\top \! AV$ for any $V$ with orthonormal columns.
In this case, the eigenvalues of $V^\top\!AV$ are necessarily in the left-half plane.

Although \eqref{eq:mor1} represents a Taylor expansion at $s=\infty$, matching the first couple of moments by a projection often also yields good fits for smaller  frequency values.
However, for better fits near a given frequency $s_0$, a common approach is to consider the shifted transfer function
$c^T ((s-s_0)\,I - (A-s_0 \, I))^{-1} \, b$ instead, leading to moments of the form $c^T (A-s_0 I)^{-j} \, b$.

For generalized linear systems $E \, \dot x(t) = A\,x(t) + b \, u(t)$, with $E$ nonsingular, the associated transfer function is of the form
\[
H(s) = c^\top (s\,E-A)^{-1} \, b = c^\top (s\,I-E^{-1} A)^{-1} \, E^{-1}b,
\]
and there are similar one-sided and two-sided methods available as for the standard case.

\subsection{Operator learning}

Increasingly, scientists and engineers use deep learning methods to build reduced order models of dynamical systems and solution operators of PDEs. Neural operators~\cite{boulle2024mathematical, kovachki2024operator} are infinite-dimensional analogues of neural networks used to approximate these operators using data generated from physical experiments or numerical simulations. For linear solution operators, a natural idealized model accesses both forward queries $f \mapsto \cala f$ and adjoint queries $g\mapsto \cala^\ast g$. In most practical settings, however, only $f \mapsto \cala f$ is available.

If $\cala$ is the solution operator of a linear PDE, then access to the adjoint $\cala^\ast$ is usually unrealistic as the adjoint is the solution operator to a different PDE. Thus, the goal of recent work~\cite{boulle2024operator} is to build a theoretical framework for adjoint-free operator learning.  The central contribution of this work is an adjoint-free operator learning algorithm whose convergence relies on prior knowledge about ${\rm range}(\cala^\ast)$ encoded in a prior, self-adjoint operator $L$. In the case of elliptic PDEs, elliptic regularity is used to obtain an  algebraic adjoint-free convergence rate~\cite[Section 4.2]{boulle2024operator}. For more general classes of operators,  larger sample complexity  guarantees  that depend on the smoothness of the operator have been derived~\cite{adcock2024sample,adcock2025towards,kovachki2024data}.

Thus, while adjoint-free operator learning may be impossible or highly inefficient for general operators, operators arising from PDEs or dynamical systems may have structure that enables adjoint-free operator learning.  With adjoint access, existing convergence guarantees for learning elliptic PDEs are exponential, rather than algebraic~\cite{boulle2023elliptic}. It is an open question whether one can close this theoretical gap between the adjoint and adjoint-free settings.

Motivated by the operator learning setting, the analogous problem of matrix recovery from matrix-vector products has also been studied in many works~\cite{amsel2025query,amsel2026fixed, ChenDumanKelesHalikiasMuscoMuscoPersson:2025,halikias2024structured}.
In~\cite{amsel2025query}, a theoretical gap is established between the query complexity of matrix recovery using one-sided versus two-sided queries. To find  a near-optimal approximation within a finite family $\calf$ to a general matrix, they establish a quadratic improvement in query complexity over the TF setting. The algorithm uses sketches on one side to identify the fraction of ``bad'' approximations that sketches on the other side can eliminate from $\calf$. This barrier may  extend to other matrix recovery problems.  

\subsection{Unmatched transpose}
The previous subsections  focus  on TF methods for various problems.
There has also been considerable  attention to problems involving an operator with an \emph{unmatched transpose}, or \emph{adjoint mismatch}.
This usually arises from the fact that  the forward and transpose operators correspond to separate and extensive software implementations, where the resulting actions are not close to  transposes of one another.
A typical motivation comes from CT, see, e.g., \cite{ZGu00}, where $A$ corresponds to the projection and $A^\top$ to the backprojection.
An unmatched transpose may be viewed as having \textit{inexact information about $A$'s row space}.

Consider $A$ with unmatched transpose $B$.
Typically, $B$ is a ``quite inaccurate transpose of $A$,'' i.e.,  $BA$ is  a nonsymmetric operator with, besides positive eigenvalues, real but negative eigenvalues, complex eigenvalues with positive real part, and also complex eigenvalues with negative real part; see, e.g., \cite{DHH19}.
The operator $BA$ gives rise to interesting recent developments.
In \cite{EHa18, LRS18}, column- and row-oriented iterative methods are studied.
BA-GMRES is the topic of \cite{HHM22}. 
For a least squares problem $Ax \approx b$, it is shown in~\cite{Wat26} that ``not-normal equations'' of the form $BAx = Bb$ may have conditioning advantages compared to the usual normal equations.
In \cite{BLW25}, the authors address the question of how to quantify the norm of the mismatch.
A proximal gradient method for an ill-posed problem with a nonsmooth regularization term involving an unmatched transpose is discussed in \cite{CPS23}.

\section{Query model and identifiability}
\label{sec:background}

We now introduce the TF query model and the basic mechanisms that underlie the impossibility and lower-bound results in the rest of the paper. We use an oracle model closely related to standard models in matrix sketching and matrix recovery~\cite{halikias2024structured,SunWoodruffYangZhang:2021,woodruff2014sketching}. An unknown matrix $A \in \R^{m\times n}$ is accessed only through the operation
\[
x \longmapsto Ax, \qquad x \in \R^n.
\]
At round $i$, the algorithm chooses a query vector $x_i$, possibly adaptively and using internal randomness, and receives the response $Ax_i$. After $q$ rounds, it has observed the transcript
\[
(x_1,Ax_1), \dotsc, (x_q,Ax_q),
\]
and must output the object of interest. The central structural fact is that the transcript depends only on the action of $A$ on the query space
\[
\calx_q = \mathrm{span}\{x_1,\dots,x_q\}.
\]
The action of $A$ on anything outside of $\calx_q$ is invisible to the algorithm. This leads to two complementary notions that recur throughout the paper. First, a solution to a problem may be \emph{non-identifiable} from the transcript in the sense that for every matrix $A$, there exists a matrix $B$ such that
\[
Ax = Bx \qquad \text{for all } x \in \calx_q,
\]
even though the solutions to the problems involving $A$ and $B$ are significantly different. In this type of result, the construction of the adversary $B$ depends on the transcript. As a consequence, no deterministic TF algorithm can solve the problem from the transcript alone. Second, while a randomized algorithm may avoid the adversary with high probability, it may still suffer from high \textit{query complexity}; one needs many queries before the transcript contains enough information to guarantee success.

 Importantly, a lower bound on query complexity does not imply a lower bound on the number of queries needed for identifiability, nor vice versa. For example, in~\Cref{prop:spectral_norm_identifiability}, we show that for any matrix $A$, one needs $n$ queries with $A$ for the Frobenius norm of $A$ to be uniquely determined. However,  Hutchinson's estimator applied to $A^\top A$ needs only $\calo(\eps^{-2})$ queries to achieve an $\eps$-approximation. This is because given a query space $\mathcal X_q$, a  non-identifiability argument constructs an adversary using  $\mathcal X_q^\perp $. Randomized algorithms can circumvent this obstruction because  $\mathcal X_q$ itself is random, and a fixed adversary is unlikely to remain indistinguishable from $A$ across all random query realizations. Conversely, query complexity lower bounds obtained via Yao's minimax principle do not explicitly characterize the adversarial matrices  associated with $\mathcal X_q$, whereas non-identifiability arguments  directly exhibit such matrices,  providing geometric insight into the problem. Thus, these types of results are qualitatively different measures of hardness and formally incomparable. 

\subsection{Yao's minimax principle and hard distributions}
\label{sec:yao}

Our quantitative lower bounds on query complexity are proved using Yao's minimax principle~\cite{Yao77}. In the present setting, the principle says that to lower-bound the query complexity of randomized, possibly adaptive TF algorithms, it suffices to find a hard distribution of matrices and analyze the average performance of all deterministic, adaptive algorithms over that distribution. Formally, if $\cala$ denotes a class of deterministic algorithms, $\calr$ the associated randomized algorithms, and $c(\mathrm{alg},A)$ a cost function, then
\[
\inf_{R \, \in \, \calr} \sup_{A} \ \E[c(R, A)]
\;\ge\;
\sup_{\cald} \inf_{\mathrm{alg} \, \in \, \cala} \ \E_{A \sim \cald}[c(\mathrm{alg}, A)].
\]
Thus, once a hard distribution $\cald$ is fixed, one may treat the algorithm as deterministic and focus entirely on what the transcript can and cannot reveal.

In Sections~\ref{sec:low-rank} and \ref{sec:schatten}, we use two kinds of cost functions. For  low-rank approximation, we take $c(\mathrm{alg},A)$ to be the failure indicator, so that its expectation is the failure probability. For Schatten-norm estimation, we instead take $c(\mathrm{alg},A)$ to be the estimation error. The  lower-bound arguments then follow a common pattern: construct a hard distribution, consider any deterministic algorithm with $q$ queries, and show that after observing the full transcript, there remains either ambiguity in the correct answer or enough residual uncertainty to force a nontrivial error. In this paper, we appeal to Yao's minimax principle in proving lower bounds on the TF query complexity of low-rank approximation and Frobenius/Schatten norm estimation. 

\subsection{The mechanism behind identifiability}

Several of our negative results are driven by the same geometric mechanism. We say that the solution to the problem is \textit{non-identifiable} if for all $A$, there exists a distinct matrix $B$ which satisfies the same matvecs as $A$, i.e., $AX = BX$, but the solutions to the problems with $A$ and $B$ are very different. The geometric insight is that one can ``hide'' directions orthogonal to $X$ in the matrix $B$ to produce a very different answer. This is formalized in the following proposition.

\begin{proposition}
 Let $A \in \R^{m \times n}$ be any matrix and $X \in \R^{n \times q}$ be any matrix with linearly independent columns. Then 
 \begin{align*}
 \{B \in \R^{m \times n} : BX = AX \} = \{A\,Q_1 Q_1^\top + Z Q_2^\top: Z \in \R^{m \times (n - q)}, \ Q_1 \in \R^{n \times q},\\ {\rm col}(Q_1) = {\rm col}(X), \ Q = [Q_1 \ Q_2] \ {\rm orthogonal}\}.
 \end{align*}
\end{proposition}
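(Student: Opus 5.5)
I will prove the set equality by double inclusion. The key tool is the orthogonal decomposition $I = Q_1Q_1^\top + Q_2Q_2^\top$, which holds for any orthogonal $Q = [Q_1 \ Q_2]$ with $Q_1 \in \R^{n\times q}$. Since $X$ has linearly independent columns, ${\rm col}(X)$ is $q$-dimensional. Hence there always exists $Q_1$ with orthonormal columns and ${\rm col}(Q_1) = {\rm col}(X)$, for instance from a thin QR factorization $X = Q_1 R$ with $R$ invertible. This $Q_1$ can be completed to an orthogonal $Q$, and then ${\rm col}(Q_2) = {\rm col}(X)^\perp$, so $Q_2^\top X = 0$.

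\textbf{Right-hand set $\subseteq$ left-hand set.} Take $B = AQ_1Q_1^\top + ZQ_2^\top$ with $Q_1, Q_2, Z$ as in the statement. Because ${\rm col}(Q_1) = {\rm col}(X)$ and $Q_1$ has orthonormal columns, $Q_1Q_1^\top$ is the orthogonal projector onto ${\rm col}(X)$. Therefore $Q_1Q_1^\top X = X$. Orthogonality of $Q$ also gives $Q_2^\top X = 0$. It follows that $BX = AX + Z\cdot 0 = AX$.

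\textbf{Left-hand set $\subseteq$ right-hand set.} Let $BX = AX$. Fix any admissible $Q = [Q_1\ Q_2]$, which exists by the first paragraph. Write
\[
B = BQ_1Q_1^\top + BQ_2Q_2^\top .
\]
Since $Q_1 = XR^{-1}$ for some invertible $R$, we get $BQ_1 = BXR^{-1} = AXR^{-1} = AQ_1$. Setting $Z := BQ_2 \in \R^{m\times(n-q)}$ then yields $B = AQ_1Q_1^\top + ZQ_2^\top$, which has the required form.

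\textbf{Main subtlety.} The only delicate point is reading the right-hand set correctly: it is a union over all admissible $Q$. So I must show two things. First, the inclusion "$\supseteq$" holds for every admissible $Q$, which is what the second paragraph does. Second, the inclusion "$\subseteq$" only needs some admissible $Q$, which is what the third paragraph does. One should also remark that $AQ_1Q_1^\top$ does not depend on the choice of $Q_1$, because it equals $A$ composed with the projector onto ${\rm col}(X)$. Likewise, the set $\{ZQ_2^\top : Z \in \R^{m\times(n-q)}\}$ is the same for every admissible $Q_2$: it consists of all matrices vanishing on ${\rm col}(X)$. So the right-hand side is really the single affine set $AP_X + \{M : MX = 0\}$, where $P_X$ denotes the orthogonal projector onto ${\rm col}(X)$.
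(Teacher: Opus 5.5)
Your proof is correct and matches the paper's argument for the inclusion $\subseteq$ step for step: write $X = Q_1R$, expand $B = BQQ^\top = BQ_1Q_1^\top + BQ_2Q_2^\top$, use $BQ_1 = BXR^{-1} = AQ_1$, and set $Z := BQ_2$. You also verify the reverse inclusion $\supseteq$ (via $Q_1Q_1^\top X = X$ and $Q_2^\top X = 0$) and show that the right-hand set does not depend on the choice of $Q$; the paper leaves both of these points implicit.
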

\begin{proof} 
Consider the QR factorization of the input matrix $X = Q_1 R$, where $Q_1 \in \R^{n \times q}$ has orthonormal columns and $R \in \R^{q \times q}$ is upper triangular. If $Q_2 \in \R^{n \times (n-q)}$ has orthonormal columns that are also orthonormal to the columns of $Q_1$, we note that the concatenation $Q \in \R^{n \times n}$ defined as $Q = [Q_1, Q_2]$ has orthonormal columns that span $\R^n$. Then, any $B$ satisfying $AX = BX \iff B\,Q_1 = A\,Q_1 \iff B\,Q_1 = AXR^{-1}$ also satisfies $B\,Q\,Q^\top = B$:
\[
B= B\,Q\,Q^\top = B\,Q_1\,Q_1^\top + B\,Q_2\!Q_2^\top = A\,Q_1\,Q_1^\top + \underbrace{B\,Q_2}_{Z} Q_2^\top,
\]
where $Z:= B\,Q_2 \in \R^{m \times (n-q)}$.
\end{proof}

Thus, determining if the solution of a linear algebra problem is identifiable from the transcript is equivalent to finding a $Z= B\,Q_2$ that results in a drastically different solution. For example, one may construct $Z$ so as to alter $A$'s columns lying outside of $\calx_q$, change the least squares solution by acting in a direction orthogonal to $\calx_q$, or plant additional norms or ranks in a direction orthogonal to $\calx_q$. This is the basic source of the non-identifiability results in later sections; the algorithm learns only a partial action of the matrix, and different global matrices can agree on that partial action.

\section{Spectral norm estimation}\label{sec:spectral_norm_est}

In this section, we consider  the problem of matrix-free  spectral norm estimation. With access to the transpose, there are both standard Krylov estimators and randomized methods well-suited to this task. However, we establish barriers to solving this problem in the TF setting. In particular, if one uses Krylov queries, we show that the Arnoldi projected norm approximations to the spectral norm can exhibit nearly arbitrary convergence. We also prove a non-identifiability result for TF spectral and Frobenius norm estimation.

\subsection{Krylov approximation of the spectral norm}\label{sec:krylov}
We begin with the Krylov setting. The results in this section should be viewed in the spirit of the ``any convergence curve is possible'' literature for GMRES and Arnoldi, beginning with the classical result of Greenbaum, Pt\'ak, and Strako\v{s} for GMRES convergence curves~\cite{GPS96} and followed by related pathology results for restarted GMRES, Arnoldi Ritz values, and harmonic Ritz values~\cite{DTe17,Sch16,TMe12,VLa11}. Those works show that, under surprisingly weak constraints, Krylov output can be made to follow nearly arbitrary prescribed behavior. Our result is analogous in flavor but different in target: instead of residual curves or Ritz values, we consider the Arnoldi projected norms used as TF proxies for the spectral norm.

Let $b$ of unit norm be given; we use this $b$ as the first column $v_1$ of $V_k$.
Recall that the Krylov space of order $k$ is given by
$\calv_k = \calk_k(A,v_1) := {\rm span}\{v_1, Av_1, \dots, A^{k-1}v_1\}$.
After $k$ steps, the Krylov relation is
\[
A V_k = V_{k+1} \, H_{k+1,k}.
\]
Here, $V_k$ and $V_{k+1}$ are matrices with orthonormal columns, spanning $\calv_k$ and $\calv_{k+1}$ respectively, and $H_{k+1,k}$ is an upper Hessenberg matrix.
In a TF setting, a natural Arnoldi approximation to the spectral norm is the norm of the restriction of the operator to $\calv_k$,
\[
\|A_{\vert_{\calv_k}}\| = \|AV_k\| = \|V_{k+1} \, H_{k+1,k}\| = \|H_{k+1,k}\|.
\]
(This quantity is at least as large as $\|H_{k,k}\| = \|V_k^\top \! AV_k\|$.)
The quantity $\|H_{k+1,k}\|$ is the natural object produced by Arnoldi from forward matvecs alone. The point of this section is that, just as GMRES and Arnoldi can exhibit highly noninformative prescribed behavior in the classical pathology literature, these projected norms can also behave in an essentially arbitrary way. In particular, forward-only Krylov information need not provide a reliable guide to $\|A\|$.

For the two results in this section, we use a technique that has also been used by \cite{GPS96}, in the first ``any convergence curve is possible'' paper: a companion matrix which is also upper Hessenberg.
The benefit of Hessenberg structure is that $\calk_k(A, e_1) = {\rm span}\{e_1, \dotsc, e_k\}$, while the companion matrix form makes it easier to prescribe the behavior of a specified desired quantity: an appropriate largest singular value (matrix two-norm) in our case. For consistency with the rest of the paper, our results are for  matrices with real valued, however  the identical proof also applies to complex matrices. 

\begin{proposition}[Any nondecreasing curve is possible; singular value version] \label{prop:svd}
Given a positive nondecreasing sequence $0 < \sigma_1 \le \cdots \le \sigma_n$ and a nonzero starting vector $b \in \R^n$,
there exists an $n \times n$ matrix $A$ such that $\|H_{k+1,k}\| = \sigma_k$, for $1 \le k \le n-1$ and $\|A\| = \sigma_n$.
\end{proposition}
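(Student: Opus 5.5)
The plan is to build $A$ as an explicit upper Hessenberg matrix with starting vector $e_1$, and then transfer the construction to an arbitrary $b$ by an orthogonal similarity.

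\textbf{Reduction to $b = e_1$.} Let $v_1 = b/\|b\|$ and pick any orthogonal $Q$ with $Qe_1 = v_1$. If $\wt A$ works for the starting vector $e_1$, set $A = Q\wt A Q^\top$. Arnoldi applied to $(A,v_1)$ produces the basis $Q\wt V_k$ and exactly the same Hessenberg matrices $H_{k+1,k}$ as Arnoldi applied to $(\wt A, e_1)$. Also $\|A\| = \|\wt A\|$. So it suffices to treat $b = e_1$.

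\textbf{Construction.} In the spirit of the companion-matrix technique of \cite{GPS96}, I will take
\[
\wt A = \mtxa{cc}{0 & \sigma_n \\ \operatorname{diag}(\sigma_1,\dots,\sigma_{n-1}) & 0},
\]
that is, $\wt A e_k = \sigma_k e_{k+1}$ for $k \le n-1$ and $\wt A e_n = \sigma_n e_1$. This is the cyclic shift times $\operatorname{diag}(\sigma_1,\dots,\sigma_n)$.

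\textbf{Arnoldi run.} The matrix $\wt A$ is upper Hessenberg with all subdiagonal entries $\sigma_k > 0$, so it is unreduced. Hence $\calk_k(\wt A,e_1) = {\rm span}\{e_1,\dots,e_k\}$ for every $k \le n$, and Arnoldi does not break down before step $n$. By induction, Arnoldi returns $v_{k+1} = \wt A v_k/\|\wt A v_k\| = e_{k+1}$. At step $k$ the vector $\wt A e_k = \sigma_k e_{k+1}$ is already orthogonal to $e_1,\dots,e_k$, so its normalization is $e_{k+1}$ with a positive sign; in any case, sign conventions would not affect norms. Therefore $V_{k+1} = [e_1,\dots,e_{k+1}]$. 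It follows that $H_{k+1,k} = V_{k+1}^\top \wt A V_k$ is the $(k+1)\times k$ matrix with $\sigma_1,\dots,\sigma_k$ on its subdiagonal and zeros elsewhere.

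\textbf{Norms.} $H_{k+1,k}$ has mutually orthogonal columns with norms $\sigma_1,\dots,\sigma_k$. Its singular values are therefore $\sigma_1,\dots,\sigma_k$, and since the sequence is nondecreasing, $\|H_{k+1,k}\| = \max_{j\le k}\sigma_j = \sigma_k$ for $1 \le k \le n-1$. Likewise $\wt A$ is a permutation times a diagonal matrix, so its singular values are $\sigma_1,\dots,\sigma_n$ and $\|\wt A\| = \sigma_n$.

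No step is a real obstacle. The only points needing care are that Arnoldi does not terminate early (guaranteed by positivity of the $\sigma_k$) and that the reduction from a general $b$ to $e_1$ leaves the $H_{k+1,k}$ invariant. The same argument works verbatim over $\C$ with a unitary $Q$.
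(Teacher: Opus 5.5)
Your proposal is correct and follows the paper's proof essentially verbatim. The paper uses the same weighted cyclic shift $S e_j = \sigma_j e_{j+1}$, $S e_n = \sigma_n e_1$, the same observation that Arnoldi from $e_1$ returns the canonical basis so that $H_{k+1,k}$ has subdiagonal entries $\sigma_1,\dots,\sigma_k$, and the same orthogonal similarity $A = Q S Q^\top$ to pass to a general starting vector $b$.
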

\begin{proof}
Since Arnoldi starts from the normalized vector $b\,/\,\|b\|$, we may assume without loss of generality that $\|b\|=1$.
First consider the case $b = e_1$, the first standard basis vector.
Define the weighted cyclic shift $S \in \R^{n\times n}$ by
\[
S \, e_j = \sigma_j \, e_{j+1}, \qquad 1\le j\le n-1,
\qquad
S \, e_n = \sigma_n \, e_1.
\]
Note that $S$ is both upper Hessenberg and of companion matrix type.
Then $S^\top S = \operatorname{diag}(\sigma_1^2,\dots,\sigma_n^2)$, so the singular values of $S$ are exactly $\sigma_1,\dots,\sigma_n$, and therefore $\|S\| = \sigma_n$.

Applying Arnoldi to $S$ with starting vector $e_1$, the Krylov vectors are
\[
e_1, \quad S \, e_1 = \sigma_1 \, e_2, \quad S^2 \, e_1 = \sigma_1\sigma_2 \, e_3, \quad \dots,
\]
so the Arnoldi basis is the canonical basis up to harmless signs and scalings. Consequently, for $1\le k\le n-1$, the projected matrix $H_{k+1,k}$ has subdiagonal entries $\sigma_1,\dots,\sigma_k$ and zeros elsewhere. Its nonzero singular values are therefore $\sigma_1,\dots,\sigma_k$, and since the sequence is nondecreasing we obtain
\[
\|H_{k+1,k}\| = \max_{1 \, \le j \, \le \, k} \sigma_j = \sigma_k.
\]
Thus the proposition holds for the starting vector $e_1$.

For a general vector $b$ with unit norm, let $Q$ be any orthogonal matrix whose first column is $b$, and set $A = Q \, S \, Q^\top$.
Running Arnoldi on $A$ with starting vector $b$ is unitarily equivalent to running Arnoldi on $S$ with starting vector $e_1$. In particular, the Hessenberg matrices produced by the two processes are identical, so $\|H_{k+1,k}\| = \sigma_k$ for $1\le k\le n-1$. Since unitary similarity preserves the operator norm, $\|A\| = \|S\| = \sigma_n$.
\end{proof}
This proposition shows that the Arnoldi projected norms alone impose essentially no meaningful monotonic relation to the true spectral norm beyond the trivial bound $\|H_{k+1,k}\|\le \|A\|$. In particular, a forward-only Krylov method may observe an arbitrarily prescribed nondecreasing history before the final norm is revealed.

The point of the next result is that the pathology from \Cref{prop:svd} is not merely a consequence of unconstrained spectral data. Even after fixing the eigenvalues, the projected norms produced by Arnoldi can still be forced to follow an arbitrary nondecreasing curve.

\begin{proposition}[Any nondecreasing curve is possible; eigenvalue version] \label{prop:eig}
Given a positive and nondecreasing sequence $0 < \sigma_1 \le \cdots \le \sigma_{n-1}$, eigenvalues $\lambda_1$, \dots, $\lambda_n \in \C$, and a nonzero starting vector $b \in \R^n$,
there exists an $n \times n$ matrix $A$ such that:
\begin{itemize}
\item $\|H_{k+1,k}\| = \sigma_k$, \ for $1 \le k \le n-1$;
\item $A$ has eigenvalues $\{\lambda_1, \dots, \lambda_n\}$.
\end{itemize}
\end{proposition}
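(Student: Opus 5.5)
The plan is to reuse the weighted-shift construction from the proof of \Cref{prop:svd}, but to free up the last column so that it prescribes the characteristic polynomial. The key observation is that for $k \le n-1$, the matrix $H_{k+1,k}$ depends only on the action of $A$ on $e_1,\dots,e_{k} \subseteq \{e_1,\dots,e_{n-1}\}$. So the last column $A e_n$ never enters $H_{k+1,k}$ for $1\le k\le n-1$, and it can be used to fix the spectrum. As in \Cref{prop:svd}, I first treat $b=e_1$ and then pass to a general unit vector $b$ by the orthogonal similarity $A \mapsto QAQ^\top$ with $Qe_1=b$. That similarity preserves both the Arnoldi Hessenberg matrices and the eigenvalues.

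For $b=e_1$, define $A$ by $Ae_j=\sigma_j e_{j+1}$ for $1\le j\le n-1$, and $Ae_n=c$ with $c$ to be chosen. Let $\pi_0=1$ and $\pi_j=\sigma_1\cdots\sigma_j$. Then $A^j e_1=\pi_j e_{j+1}$ for $0\le j\le n-1$. Since every $\sigma_j>0$, Arnoldi does not break down before step $n$. The Arnoldi basis is $e_1,\dots,e_n$ and requires no sign changes, because the subdiagonal entries $\sigma_j$ are already positive. Hence, for $k\le n-1$, $H_{k+1,k}$ has subdiagonal $\sigma_1,\dots,\sigma_k$ and zeros elsewhere. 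Exactly as in \Cref{prop:svd}, this gives $\|H_{k+1,k}\|=\max_{j\le k}\sigma_j=\sigma_k$.

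To prescribe the eigenvalues, let $p(z)=\prod_i(z-\lambda_i)=z^n+\sum_{j=0}^{n-1}a_j z^j$. Set $D=\operatorname{diag}(\pi_0,\dots,\pi_{n-1})$, whose columns are the Krylov vectors $A^{j}e_1$. I require $A^n e_1=-\sum_{j=0}^{n-1}a_j A^j e_1$, which amounts to choosing
\[
c=\frac{1}{\pi_{n-1}}\,A^n e_1=-\frac{1}{\pi_{n-1}}\sum_{j=0}^{n-1}a_j\pi_j e_{j+1}.
\]
Then $D^{-1}AD$ is the companion matrix of $p$, since $A$ maps each Krylov vector to the next and maps the last Krylov vector according to $p$. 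Therefore $A$ has characteristic polynomial $p$ and eigenvalues $\lambda_1,\dots,\lambda_n$. Equivalently, $e_1$ is a cyclic vector, and the minimal polynomial of $A$ with respect to $e_1$ is $p$.

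The only delicate point is the field. If the eigenvalues are not closed under complex conjugation, then $p$ has nonreal coefficients, so $c$ and $A$ must be complex. In that case I take $Q$ unitary and work over $\C$, as the paper remarks the proofs allow. If the multiset $\{\lambda_i\}$ is conjugation-closed, then $p$, $c$, $A$ and $Q$ are all real. The remaining check is routine: the similarity by $Q$ maps the Krylov spaces of $(A,e_1)$ to those of $(QAQ^\ast,b)$ and leaves each $H_{k+1,k}$ unchanged.
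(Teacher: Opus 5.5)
Your proposal is correct and is essentially the paper's proof. Both use the same diagonally scaled companion matrix $A = DCD^{-1}$ with subdiagonal $\sigma_1,\dots,\sigma_{n-1}$, observe that Arnoldi from $e_1$ reproduces the canonical basis, and handle a general $b$ by an orthogonal similarity. Your explicit remark that $A$ must be complex when $\{\lambda_i\}$ is not closed under conjugation is a useful precision, which the paper's proof glosses over by writing $A \in \R^{n\times n}$.
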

\begin{proof}
As in the proof of \Cref{prop:svd}, we may assume that $\|b\|=1$. Write
\[
\pi(\lambda) = \textstyle \prod_{j=1}^n \, (\lambda-\lambda_j)
= \lambda^n - \beta_1 \, \lambda^{n-1} + \beta_2 \, \lambda^{n-2} - \cdots + (-1)^n \, \beta_n,
\]
where $\beta_1, \dots, \beta_n$ are the elementary symmetric polynomials in $\lambda_1,\dots,\lambda_n$.

First consider the starting vector $b = e_1$. Define $A \in \R^{n\times n}$ to be the scaled companion type matrix
\[
A =
\mtxa{ccccc}{
0 & \cdots & \cdots & 0 & (-1)^{n-1} \, \frac{\beta_n}{\sigma_1 \, \cdots \, \sigma_{n-1}} \\[2mm]
\sigma_1 & 0 & & 0 & (-1)^{n-2} \, \frac{\beta_{n-1}}{\sigma_2 \, \cdots \, \sigma_{n-1}} \\[1mm]
0 & \sigma_2 & \ddots & \vdots & \vdots \\
\vdots & \ddots & \ddots & 0 & -\frac{\beta_2}{\sigma_{n-1}} \\[1mm]
0 & \cdots & 0 & \sigma_{n-1} & \beta_1}.
\]
Note that $A$ is of the form $A = D \, C \, D^{-1}$, where $C$ is a standard companion matrix, and
\[
D = {\rm diag}(1, \ \sigma_1, \ \sigma_1 \, \sigma_2, \ \dotsc, \ \sigma_1 \cdots \sigma_{n-1}),
\]
where the scaling preserves the spectrum.
It is easy to see that Arnoldi started from $e_1$ again generates the canonical basis.
Hence, for each $1\le k\le n-1$, the matrix $H_{k+1,k}$ has subdiagonal entries $\sigma_1,\dots,\sigma_k$, so
\[
\|H_{k+1,k}\| = \sigma_k.
\]
Furthermore, for the eigenvalues, it is not hard to verify that
\[
\det(A-\lambda I)
=
(-1)^n \, (\lambda^n - \beta_1 \, \lambda^{n-1} + \beta_2 \, \lambda^{n-2} - \cdots + (-1)^n \, \beta_n) = (-1)^n \, \pi(\lambda).
\]
Thus the roots of $\det(A-\lambda I)$ are exactly $\lambda_1,\dots,\lambda_n$.

For a general unit vector $b$, choose a unitary matrix $Q$ with first column $b$ and replace $A$ by $Q\,A\,Q^\top$. The Arnoldi process is unchanged up to unitary equivalence, so the same Hessenberg matrices are produced, and unitary similarity preserves the eigenvalues.
\end{proof}

While the prior results exhibit arbitrarily bad convergence of a TF Krylov method, we investigate the typical behavior in the next two examples.

\begin{example} \rm
In \Cref{fig:1}, we show  behavior of Krylov spaces using the transpose (blue curves) versus TF Krylov methods (red curves) for approximating the largest and smallest singular values.
The matrix on the left hand side of ~\Cref{fig:1} is $A = Q_1 \cdot {\rm diag}(1, \dotsc, 1000) \cdot Q_2$, where $Q_1$ and $Q_2$ are random orthogonal matrices, so that $\|A\| = \sigma_{\max}(A) = 1000$ and $\|A^{-1}\|^{-1} = \sigma_{\min}(A) = 1$.
The matrix on the right hand side is constructed in the same way, but $\text{diag}(1, \dots, 100)$ is replaced with $\text{diag}(\sigma_i)$, where
$\sigma_i = 1 + \frac{999}{i}$,
so that the $n$ singular values decay algebraically with $\sigma_{\max} = 1000$ and $\sigma_{\min} \approx 2$.
Note also that this results in a large gap between $\sigma_1 = \sigma_{\max}$ and $\sigma_2$. In all plots, the dotted line represents the true value of $\sigma_{\min}$ or $\sigma_{\max}$, and the blue curves indicate the average convergence curves of approximations to the extreme singular values, using 10 different initial random vectors $v_1$.
For every $v_1$, we run Lanczos bidiagonalization to obtain the matrix relations
\[
A \, V_k = U_k \, B_{k,k}, \qquad A^\top U_k = V_{k+1} \, (B_{k,k+1})^\top,
\]
where $B_{k,k+1}$ is a $k \times (k+1)$ upper bidiagonal matrix.
For $j = 1, \dotsc, 25$, we then plot $\sigma_{\max}(B_{j+1,j})$ and $\sigma_{\min}(B_{j+1,j})$, after $2j$ matvecs ($j$ with $A$ and $j$ with $A^\top$).
This means that we approximate from the Krylov spaces $\calk_j(A^\top \! A, \, v_1)$.
As is well known, the convergence to $\sigma_{\max}$ is usually very fast, while the approximation of the $\sigma_{\min}$ is much harder. 

We observe how hard it may be to approximate extreme singular values without the transpose. The red curves represent the average convergence using the TF Krylov spaces $\calk_j(A, \, v_1)$, for $j = 1, \dotsc, 50$ (using an equal number of matvecs for a fair comparison). In the first case of equispaced singular values between 1 and 1000 (left side of~\Cref{fig:1}), the TF method does much worse than the transpose method in approximating both $\sigma_{\max} = 1000$ and $\sigma_{\min} = 1$.
 In the case of  algebraic singular value decay (right side of~\Cref{fig:1}),  the TF method also struggles to approximate $\sigma_{\max} = 1000$, but performs  better than the transpose method in approximating $\sigma_{\min} \approx 2$.

\begin{figure}[htbp]
\centering

\begin{subfigure}{0.48\textwidth}
    \centering
    \begin{overpic}[width=0.8\textwidth]{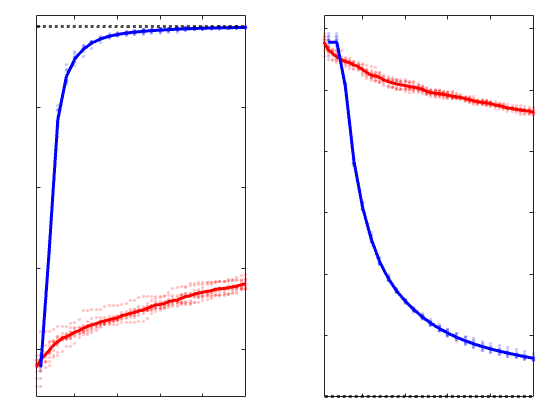}

\put(20,75){\small $\sigma_{\max}$}
\put(70,75){\small $\sigma_{\min}$}

\put(19,-5){\footnotesize  matvecs}
\put(69,-5){\footnotesize matvecs}

\put(15,60){\color{blue} transpose}
\put(27,25){\color{red} TF}

\put(65,64){\color{red} TF}
\put(68,30){\color{blue} transpose}

\put(11,0){\scriptsize{10}}
\put(18.5,0){\scriptsize{20}}
\put(26.5,0){\scriptsize{30}}
\put(34,0){\scriptsize{40}}
\put(41.5,0){\scriptsize{50}}

\put(62.5,0){\scriptsize{10}}
\put(70,0){\scriptsize{20}}
\put(77.5,0){\scriptsize{30}}
\put(85.5,0){\scriptsize{40}}
\put(93,0){\scriptsize{50}}

\put(-2,11.5){\scriptsize 600}
\put(-2,25.5){\scriptsize 700}
\put(-2,40){\scriptsize 800}
\put(-2,54.5){\scriptsize 900}
\put(-4,69){\scriptsize 1000}

\put(54,3){\scriptsize 0}
\put(50,13.5){\scriptsize 100}
\put(50,24.5){\scriptsize 200}
\put(50,35.5){\scriptsize 300}
\put(50,46.5){\scriptsize 400}
\put(50,57.5){\scriptsize 500}
\put(50,68.5){\scriptsize 600}

\end{overpic}
\end{subfigure}
\hfill
    \begin{subfigure}{0.48\textwidth}
    \centering
    \begin{overpic}[width=0.8\textwidth]{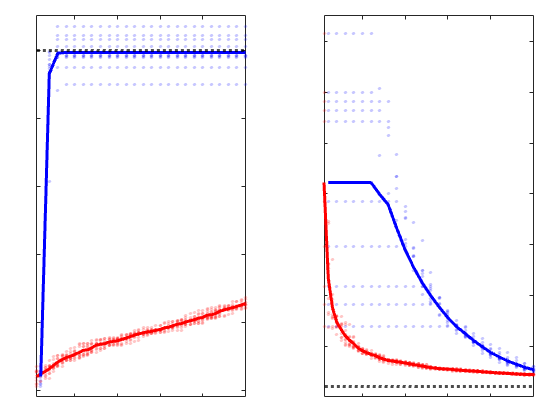}

\put(20,75){\small $\sigma_{\max}$}
\put(70,75){\small $\sigma_{\min}$}

\put(19,-5){\footnotesize  matvecs}
\put(69,-5){\footnotesize matvecs}

\put(15,55){\color{blue} transpose}
\put(27,21){\color{red} TF}

\put(68,11.5){\color{red} TF}
\put(60,45){\color{blue} transpose}

\put(11,0){\scriptsize{10}}
\put(18.5,0){\scriptsize{20}}
\put(26.5,0){\scriptsize{30}}
\put(34,0){\scriptsize{40}}
\put(41.5,0){\scriptsize{50}}

\put(62.5,0){\scriptsize{10}}
\put(70,0){\scriptsize{20}}
\put(77.5,0){\scriptsize{30}}
\put(85.5,0){\scriptsize{40}}
\put(93,0){\scriptsize{50}}

\put(3,4){\scriptsize 0}
\put(-1.5,16){\scriptsize 200}
\put(-1.5,28){\scriptsize 400}
\put(-1.5,40.5){\scriptsize 600}
\put(-1.5,52.5){\scriptsize 800}
\put(-3.8,64.5){\scriptsize 1000}

\put(54,3){\scriptsize 0}
\put(52.5,12){\scriptsize 10}
\put(52.5,21){\scriptsize 20}
\put(52.5,30){\scriptsize 30}
\put(52.5,39){\scriptsize 40}
\put(52.5,48){\scriptsize 50}
\put(52.5,57){\scriptsize 60}
\put(52.5,66){\scriptsize 70}

\end{overpic}
    \label{fig:1b}
\end{subfigure}

\caption{A comparison of the approximation of the largest and smallest singular values versus the number of matrix-vector products for two  matrices. Left: random $1000 \times 1000$ matrix with singular values $1, 2, \dots, 1000$. Right:   $1000 \times 1000$ matrix with singular values algebraically decaying  from 1000 to about 2. The curves plot the averages over 10 different initial random vectors, and the faint blue and red scattered points represent the 10 different trials. The black dotted horizontal lines show the true values of $\sigma_{\max}$ and $\sigma_{\min}$.}
\label{fig:1}
\end{figure}
\end{example}
We now consider a family of finite difference matrices arising from PDEs. 
\begin{example} \rm We repeat the previous experiment for two $1000 \times 1000$ finite difference matrices for a convection-diffusion operator with a reaction term, i.e.,   
\[
\mathcal{L}u = u_{xx} + \alpha \, u_x + u \quad \text{on} \quad (0, 1), \qquad u(0) = u(1) = 0,
\]
where $\alpha$ parametrizes the convection term. Setting $\alpha=0$ gives a symmetric discretization. Increasing $\alpha$ makes the problem increasingly convection-dominated and the discretized operator increasingly nonnormal.

In \Cref{fig:2}, we display the plots for the values $\alpha = 1$ and $\alpha = 10^4$. In both cases,  the TF approximation of the extreme singular values performs better than the method using the transpose, and both methods do well in approximating $\sigma_{\max}$ and struggle to approximate $\sigma_{\min}$. In approximating $\sigma_{\min}$, the TF method actually outdoes the transpose method for both values of $\alpha$. In the case of $\alpha = 10^4$, the TF method and transpose method do about equally poorly in approximating $\sigma_\text{min}$. 
\begin{figure}[t]
\centering
    \begin{subfigure}{0.48\textwidth}
    \centering
    \begin{overpic}[width=0.8\textwidth]{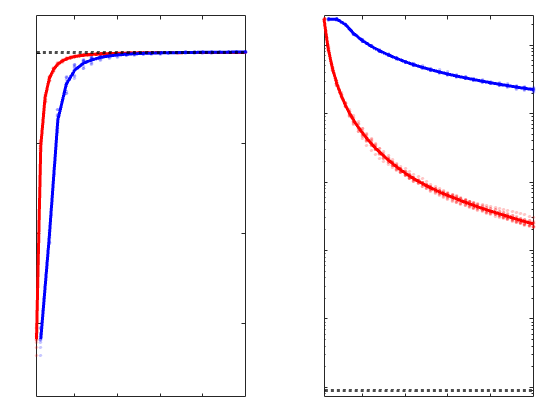}

\put(20,75){\small $\sigma_{\max}$}
\put(70,75){\small $\sigma_{\min}$}

\put(19,-5){\footnotesize  matvecs}
\put(69,-5){\footnotesize matvecs}

\put(15,58){\color{blue} transpose}
\put(9,66.5){\color{red} TF}

\put(60,40){\color{red} TF}
\put(67,67){\color{blue} transpose}

\put(11,0){\scriptsize{10}}
\put(18.5,0){\scriptsize{20}}
\put(26.5,0){\scriptsize{30}}
\put(34,0){\scriptsize{40}}
\put(41.5,0){\scriptsize{50}}

\put(62.5,0){\scriptsize{10}}
\put(70,0){\scriptsize{20}}
\put(77.5,0){\scriptsize{30}}
\put(85.5,0){\scriptsize{40}}
\put(93,0){\scriptsize{50}}

\put(-.5,15.5){\scriptsize $2.5$}
\put(3,32){\scriptsize $3$}
\put(-.5,48){\scriptsize $3.5$}
\put(3,64){\scriptsize $4$}
\put(1, 73){\scriptsize $\times 10^6$}

\put(50,16.5){\scriptsize $10^2$}
\put(50,40.5){\scriptsize $10^4$}
\put(50,65){\scriptsize $10^6$}

\end{overpic}
\end{subfigure}
\hfill
\begin{subfigure}{0.48\textwidth}
    \centering
        \begin{overpic}[width=0.8\textwidth]{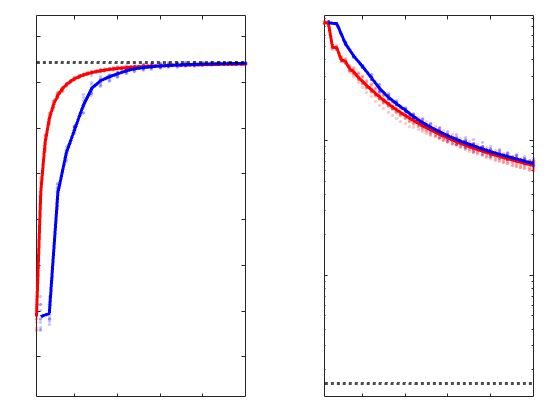}

\put(20,75){\small $\sigma_{\max}$}
\put(70,75){\small $\sigma_{\min}$}

\put(19,-5){\footnotesize  matvecs}
\put(69,-5){\footnotesize matvecs}

\put(15,51){\color{blue} transpose}
\put(9,65){\color{red} TF}

\put(60,50){\color{red} TF}
\put(65,65){\color{blue} transpose}

\put(11,0){\scriptsize{10}}
\put(18.5,0){\scriptsize{20}}
\put(26.5,0){\scriptsize{30}}
\put(34,0){\scriptsize{40}}
\put(41.5,0){\scriptsize{50}}

\put(62.5,0){\scriptsize{10}}
\put(70,0){\scriptsize{20}}
\put(77.5,0){\scriptsize{30}}
\put(85.5,0){\scriptsize{40}}
\put(93,0){\scriptsize{50}}

\put(3,10){\scriptsize $7$}
\put(3, 26){\scriptsize $8$}
\put(3,42.5){\scriptsize $9$}
\put(1,58.5){\scriptsize $10 $}
\put(1, 73){\scriptsize $\times 10^6$}

\put(50,24){\scriptsize $10^5$}
\put(50,48.5){\scriptsize $10^6$}

\end{overpic}
\end{subfigure}

\caption{Approximation of the largest and smallest singular values versus the number of matrix-vector products for finite-difference discretizations of $u_{xx}+\alpha u_x+ u$ with two different values of $\alpha$. Left: $\alpha = 1$. Right: $\alpha = 10^4$. The right panels, corresponding to the smallest singular value, use a logarithmic scale on the $y$-axis. The curves plot the averages over 10 different initial random vectors, and the faint blue and red scattered points represent the 10 different trials. The black dotted lines show the true values of $\sigma_{\max}$ and $\sigma_{\min}$.}
\label{fig:2}
\end{figure}
\end{example}

\subsection{Non-identifiability of a matrix norm}\label{sec:spectral_norm_identifiability} 
To estimate any matrix norm using matrix-vector products, we establish a lower bound of $q \geq n$ matrix-vector products to ensure there is a unique solution to the problem. The gist of the result is that if one only has access to $n-1$ matrix-vector products with the matrix $A$, one can ``hide'' an arbitrarily large singular vector in a direction orthogonal to the $n-1$ inputs. As a result, one can construct a matrix $B$ satisfying the same $n-1$ matrix-vector products as $A$, but with an arbitrarily different norm. In particular, this result applies to the spectral, Frobenius, and Schatten norms. 

\begin{proposition}\label{prop:spectral_norm_identifiability}
Let $A \in \mathbb{R}^{m\times n}$, $X\in \mathbb{R}^{n\times q}$ with $q<n$, and $\|\cdot\|_*$ be any matrix norm. Then, for every $\delta>0$, there exists
$B\in\mathbb{R}^{m\times n}$ such that $BX=AX$ but
\[
\|B\|_* \ge \delta .
\]
\end{proposition}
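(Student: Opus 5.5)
The plan is to use the parametrization of the indistinguishable set from the proposition in the subsection on the mechanism behind identifiability, and to plant a large rank-one term in a direction orthogonal to ${\rm col}(X)$. Since $q<n$, the subspace ${\rm col}(X)$ has dimension at most $q<n$, so we can pick a unit vector $w\in\R^n$ with $X^\top w=0$. The columns of $X$ need not be independent, which is why we work with $w$ directly rather than with $Q_2$. Fix any unit vector $u\in\R^m$ (nonempty since $m\ge 1$). For a scalar $t\ge 0$, define
\[
B_t = A + t\,u w^\top .
\]
Then $B_t X = AX + t\,u\,(w^\top X) = AX$, so every $B_t$ produces the same transcript as $A$. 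In the notation of the earlier proposition, this corresponds to taking $Z = A Q_2 + t\,u\,(Q_2^\top w)^\top$ whenever $X$ has full column rank.

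Next I need to make $\|B_t\|_*$ exceed $\delta$. By the reverse triangle inequality and homogeneity of the norm,
\[
\|B_t\|_* \ge t\,\|u w^\top\|_* - \|A\|_* .
\]
Since $uw^\top\neq 0$, we have $c:=\|uw^\top\|_*>0$. Choosing $t \ge (\delta + \|A\|_*)/c$ then gives $\|B_t\|_*\ge\delta$. This step uses only the norm axioms (positivity on nonzero matrices, homogeneity, and the triangle inequality), so it covers the spectral, Frobenius, and Schatten norms simultaneously. For these concrete norms one even has $c=1$.

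No step is a genuine obstacle. The only points that need care are, first, justifying the existence of $w$ when $X$ is rank-deficient (handled by noting that $\dim {\rm col}(X)\le q<n$), and second, making sure not to rely on a specific formula for $\|uw^\top\|_*$, since $\|\cdot\|_*$ is an arbitrary norm. Both are handled above. I would close by remarking that the construction is explicit: $B$ differs from $A$ only by a rank-one matrix supported on the orthogonal complement of the query space, which is exactly the "hiding" mechanism described before the statement.
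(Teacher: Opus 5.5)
Your proposal is correct and is essentially the paper's proof: perturb $A$ by a rank-one term $t\,uw^\top$ with $w \in \operatorname{null}(X^\top)$, which leaves $AX$ unchanged, then apply the reverse triangle inequality and choose $t \ge (\delta+\|A\|_*)/\|uw^\top\|_*$. Your version uses $\|\cdot\|_*$ consistently when choosing $t$. The paper's final display instead writes $\|\cdot\|$, which is a harmless slip there.
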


\begin{proof}
Since $X^\top\in\mathbb{R}^{q\times n}$, its null space is nontrivial. Choose $u\in\mathbf{R}^m$ and $v\in \operatorname{null}(X^\top)$ with $\|v\|=\|u\|=1$. For $\tau\ge 0$,
define
\[
B=A+\tau \, uv^\top .
\]
Then $v^\top X=0$, and hence $BX=AX+\tau \, uv^\top X=AX$. Moreover, by the reverse triangle inequality,
\[
\|B\|_* \ge \|\tau \, uv^\top\|_*-\|A\|_*
= \tau \, \|uv^\top\|_*-\|A\|_*.
\]
Since $uv^\top\neq 0$ and $\|\cdot\|_*$ is a matrix norm, $\|uv^\top\|_*>0$.
Therefore, choosing
\[
\tau \ge \|uv^\top\|^{-1} \, (\delta+\|A\|)
\]
gives $\|B\|_* \ge \delta$.
\end{proof}

This result highlights an important qualitative difference between query complexity lower bounds and non-identifiability results. While identifiability of the Frobenius norm requires $n$ queries, one can use $\mathcal O(\varepsilon^{-2})$  randomized, one-sided queries $\|Ag_i\|^2$, where $g_i$'s entries are distributed as i.i.d.  $\mathcal N(0, 1)$, and apply Hutchinson's estimator~\cite{hutchinson_89} to $A^\top \! A$ to approximate  $\|A\|_F$ within relative error $\eps$. Because the identifiability result relies on an adversarial construction that depends on the transcript, a randomized algorithm can avoid this adversary with high probability.

\section{Low-rank approximation}\label{sec:low-rank}
We now turn to low-rank approximation, a problem that is well-studied in the matrix-free context~\cite{bakshi2022low, bakshi2023krylov, halko2011finding, nakatsukasa2020fast,tropp2023randomized}. In particular, lower bounds on the two-sided query complexity of low-rank approximation have been thoroughly investigated~\cite{bakshi2022low, bakshi2023krylov, meyer2024unreasonable}. In this section, we derive a simple lower bound on the query complexity of this task in the TF setting. 

We see that low-rank approximation is one of the clearest examples in which TF access remains informative, but only at a prohibitive query cost. The key observation is that on exactly rank-$k$ inputs, any $(1+\eps)$-approximation must recover the matrix exactly. This allows us to reduce approximate low-rank approximation to exact reconstruction on a carefully chosen hard distribution. We note that a non-identifiability result for this problem where the matrix is exactly rank-$k$ is proven in~\cite{halikias2024structured}. Additionally, in the exactly rank-$k$ case, an analysis of the space of possible row spaces or matrices that can be recovered from only forward matvecs is given in~\cite{otto2023model} and~\cite{boulle2024operator}, respectively. 

\subsection{Query complexity lower bound} We prove a query complexity lower bound for the more general problem of finding a low-rank approximation that is within a constant factor $\gamma > 0$ of optimal. Because this problem is strictly easier than relative approximation within a prescribed error $\eps > 0$, the lower bound also extends to relative approximation. 

\begin{theorem}\label{thm:lra-lower-bound}
Let $A\in \R^{m \times n}$ with $n \ge m$ and $1 \le k \le m $. Let $\gamma > 1$ be an approximation factor, possibly depending on $k$, $n$, and $m$, and let $A_k$ denote the best rank-$k$ approximation to $A$. Suppose a randomized, adaptive algorithm with access only to the oracle $x \mapsto Ax$ outputs a rank at most $k$ matrix $\widetilde A$ satisfying
\[
\|A-\widetilde A\|_F \le \gamma \cdot \|A-A_k\|_F
\]
with probability at least $\tfrac23$ for every $A \in \R^{m \times n}$. Then, the algorithm must use at least $n$ forward matrix-vector products.
\end{theorem}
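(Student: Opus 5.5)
The plan is to combine Yao's minimax principle (\Cref{sec:yao}) with a reduction from approximate to exact recovery on exactly low-rank inputs. If $\operatorname{rank}(A) \le k$, then $A_k = A$ and $\|A - A_k\|_F = 0$. The guarantee $\|A-\widetilde A\|_F \le \gamma \cdot 0$ therefore forces $\widetilde A = A$ exactly, whatever the value of $\gamma$. So it suffices to show that no algorithm using at most $q \le n-1$ forward matvecs can exactly recover a random rank-one matrix with probability $\tfrac23$. Rank one is admissible because $k \ge 1$.

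\textbf{Hard distribution.} I take $A = e_1 g^\top$ with $g \sim \caln(0, I_n)$, so $A$ has rank at most one. Taking the failure indicator as the cost in Yao's principle, it suffices to show that every deterministic adaptive algorithm making $q \le n-1$ queries fails with probability $1$ over $g$. A deterministic adaptive algorithm chooses $x_1$ as a fixed vector. It then chooses each $x_{i+1}$ as a function of $y_1,\dots,y_i$, where $Ax_i = (g^\top x_i)\, e_1$; the response is thus encoded by the scalar $y_i = g^\top x_i$. The final output $\widetilde A$ is a deterministic function of $(y_1,\dots,y_q)$.

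\textbf{Posterior argument.} By induction on $i$, I will show that conditionally on $y_1,\dots,y_i$, the vector $g$ is distributed as a Gaussian supported on the affine subspace $\{h : h^\top x_j = y_j,\ j \le i\}$. This Gaussian is nondegenerate on that subspace, and the subspace has dimension at least $n-i$. The inductive step uses that $x_{i+1}$ is fixed once $y_1,\dots,y_i$ are known. Conditioning a Gaussian on one further linear functional therefore yields a Gaussian on a smaller affine subspace. If $x_{i+1}$ lies in the span of the previous queries, the constraint is redundant and the subspace is unchanged.

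After $q \le n-1$ queries, the posterior of $g$ is a continuous distribution on an affine space of dimension at least $1$. The output $\widetilde A$ is fixed given the transcript, so success requires $g$ to equal the single vector $\widetilde A^\top e_1$, with the remaining rows of $\widetilde A$ also zero. This event has conditional probability zero. Integrating over transcripts, the failure probability is $1 > \tfrac13$. By Yao's principle, no randomized algorithm with $q \le n-1$ queries succeeds with probability $\tfrac23$ on every input, and hence at least $n$ forward matvecs are required.

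\textbf{Main obstacle.} The step needing the most care is making the adaptive conditioning rigorous: the query directions are themselves random through their dependence on earlier responses. I would handle this by the sequential induction above, since at each stage the next query is measurable with respect to the past responses. An alternative is to note directly that for each fixed transcript the preimage set of $g$ is an affine subspace of positive dimension, on which the point $\widetilde A^\top e_1$ has zero measure.
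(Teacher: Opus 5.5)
Your proposal is correct and follows the paper's proof essentially step for step. Both use Yao's principle with the rank-one hard distribution $A = e_1 g^\top$, $g\sim\caln(0,I_n)$, reduce to exact recovery because $\|A-A_k\|_F=0$, and observe that after $q<n$ queries the posterior of $g$ is a nondegenerate Gaussian on an affine subspace of positive dimension, so a transcript-measurable output is correct with probability zero. Your sequential induction on the adaptive queries is a somewhat more careful rendering of the conditioning step that the paper states in one sentence.
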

\begin{proof}
By Yao's minimax principle, it suffices to prove the claim for deterministic algorithms against a hard distribution. Consider the distribution
\[
A = e_1 \, g^\top, \qquad g \sim \caln(0,I_n).
\]
Every realization of this distribution has rank one, so for every $k \ge 1$,
\[
\|A-A_k\|_F = 0.
\]
Hence, any successful algorithm must output $\widetilde A = A$ exactly.
In other words, for this hard distribution, the approximation problem collapses to exact recovery of the unknown matrix.

Fix a deterministic adaptive algorithm that makes $q<n$ forward queries. Let $x_1,\dots,x_q \in \R^n$ be the queried vectors, let $X=[x_1,\dots,x_q]$, and let the transcript be
\[
AX = e_1 \, g^\top X.
\]
Conditioned on the  transcript, the vector $g$ is constrained only by the $q$ linear equations encoded by $g^\top X$. Since $q<n$, the conditional law of $g$ remains a nondegenerate Gaussian on an affine subspace of dimension at least $n-q>0$. In particular, conditioned on the transcript, the true matrix $A=e_1g^\top$ is not determined uniquely. There is still a continuum of matrices consistent with the same observed forward products.

Because the algorithm's output is measurable with respect to the transcript, the conditional probability that it outputs the exact matrix $A$ is zero. Therefore the deterministic algorithm succeeds with probability zero on this distribution whenever $q<n$. Then, by Yao's minimax principle, any randomized, possibly adaptive algorithm succeeding with probability at least $\tfrac23$ on every input must use at least $n$ forward queries.
\end{proof}
The theorem is sharp in a qualitative sense: in the TF model, near-optimal low-rank approximation is not a genuinely matrix-free task. On exact rank-$k$ inputs, approximate recovery collapses to exact recovery, which requires full-dimensional information.

\subsection{Low-rank approximation via column subset selection}

The problem of column subset selection is well-studied in numerical linear algebra, particularly as it relates to matrix factorizations like  interpolative/CUR decompositions~\cite{boutsidis2014optimal,goreinov1997theory, mahoney2009cur, voronin2017efficient}, adaptive cross approximation~\cite{bebendorf2011adaptive}, and the discrete empirical interpolation method (DEIM)~\cite{chaturantabut2010nonlinear}. We now discuss the problem of low-rank approximation via column subset selection, also known as finding a $CC^\dagger$ or $CX$ interpolative decomposition. The goal is to find an optimal or near-optimal set of $k$ columns of  $A \in \R^{m \times n}$, stored as the columns of $C \in \R^{m \times k}$ such that the low-rank approximation given by the projection $CC^\dagger A$ satisfies
\begin{equation}\label{eq:css_lra}
\|A - CC^\dagger A \|_F \le \gamma_k \cdot \|A - A_k \|_F. 
\end{equation} 

Because one is restricted to using $A$'s columns for low-rank approximation, the theoretical best possible constant is $\gamma_k \geq \sqrt{k+1}$~\cite{deshpande2006matrix}. 

In the matrix-free setting of column subset selection,  one may sample columns or rows by querying elementary basis vectors. Recently, both deterministic and randomized algorithms using matrix-vector products have been developed for matrix-free column subset selection with the guarantee~\eqref{eq:css_lra}~\cite{cortinovis2026adaptive,osinsky2025close}.
However, one cannot form $CC^\dagger A$ efficiently without transpose access, as one must apply $C^\dagger$ to every column of $A$.  Hence, a key assumption in the randomized matvec algorithm in~\cite{cortinovis2026adaptive} is that in addition to forward matvec access with $A$, one also has a good approximation to $A$'s row space, i.e., a matrix $V \in \R^{n \times k}$ such that $A\approx A VV^\top$. Prior results on TF row-space approximation already show that one cannot achieve this guarantee using only forward queries unless one performs $n$ matvecs~\cite{boulle2024operator,otto2023model}. 

If the task is just to find a good set of $k$ columns for low-rank approximation \textit{without} explicitly forming the low-rank approximation $CC^\dagger  A$, we can prove a non-identifiability result by reducing to the case where $A$ is exactly rank-$k$:
\begin{proposition}\label{prop:css_identifiability}
    Let $A \in \R^{m \times n}$ satisfy $\text{rank}(A) = k \leq \min\{m, n \}$, and let $X \in \R^{n \times (n-1)}$ with full column rank. Let $S \subset \{1, \dots, n \}$, $|S| = k$ represent a subset  of $k$ columns such that $\text{col}( A_{:, S}) = \text{col}(A)$. Suppose that there exists $v \in \R^n$ satisfying $v^\top X = 0$ for which there exists $i \in S$ such that $v_i \neq 0$ and $v \not \in \text{row}(A)$.  Then, there exists a matrix $B \in \R^{m \times n}, \text{rank}(B) = k$, such that $BX = AX$, but $\text{col}( B_{:, S}) \neq \text{col}(B)$.  
\end{proposition}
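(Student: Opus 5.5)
The plan is to perturb only the row space of $A$ along the hidden direction $v$, keeping the column space fixed. The perturbation is tuned so that the $k\times k$ block indexed by $S$ becomes singular while the whole matrix keeps rank $k$. This is exactly the mechanism of the proposition in \Cref{sec:background}: add a term of the form $Z Q_2^\top$ supported on $\mathrm{null}(X^\top)$.

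\textbf{Step 1 (factor $A$).} Since $\operatorname{rank}(A)=k$, write $A = UR$ with $U\in\R^{m\times k}$ having full column rank, $\mathrm{col}(U)=\mathrm{col}(A)$, and $R\in\R^{k\times n}$ having full row rank, $\mathrm{row}(R)=\mathrm{row}(A)$. Then $A_{:,S} = U R_{:,S}$. The hypothesis $\mathrm{col}(A_{:,S})=\mathrm{col}(A)$ means $\operatorname{rank}(A_{:,S})=k$, so the $k\times k$ block $R_S := R_{:,S}$ is invertible.

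\textbf{Step 2 (the perturbation).} Look for $B = U(R + c\,v^\top)$ with $c\in\R^k$ to be chosen. Because $v^\top X = 0$, we get $BX = URX = AX$ for every $c$. The $S$-block is $B_{:,S} = U(R_S + c\, v_S^\top)$, where $v_S$ denotes the restriction of $v$ to $S$. The hypothesis that $v_i\neq 0$ for some $i\in S$ gives $v_S\neq 0$. By the matrix determinant lemma,
\[
\det(R_S + c\,v_S^\top) = \det(R_S)\,\bigl(1 + v_S^\top R_S^{-1} c\bigr).
\]
Choose $c = -R_S v_S/\|v_S\|^2$; then $v_S^\top R_S^{-1}c = -1$, so $R_S + c\,v_S^\top$ is singular. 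Consequently $\operatorname{rank}(B_{:,S}) < k$.

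\textbf{Step 3 (rank of $B$ is exactly $k$).} This is the only step needing care, and it is where the hypothesis $v\notin\mathrm{row}(A)$ enters. Suppose $R' := R + c\,v^\top$ had rank less than $k$. Then some $y\neq 0$ satisfies $y^\top R = -(y^\top c)\,v^\top$. There are two cases.
\begin{itemize}
\item If $y^\top c\neq 0$, then $v\in\mathrm{row}(R)=\mathrm{row}(A)$, contradicting the hypothesis.
\item If $y^\top c = 0$, then $y^\top R = 0$, contradicting the full row rank of $R$.
\end{itemize}
Hence $\operatorname{rank}(R')=k$. Since $U$ has full column rank, $\operatorname{rank}(B)=k$ and $\mathrm{col}(B)=\mathrm{col}(U)$ has dimension $k$.

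\textbf{Conclusion.} We have $\mathrm{col}(B_{:,S})\subseteq \mathrm{col}(B)$ with $\dim \mathrm{col}(B_{:,S})<k=\dim\mathrm{col}(B)$, so $\mathrm{col}(B_{:,S})\neq\mathrm{col}(B)$. The full column rank of $X$ is not really used beyond guaranteeing that $\mathrm{null}(X^\top)$ contains such a $v$. The main obstacle is simultaneously killing the rank of the $S$-block while preserving the global rank. This is resolved by splitting the roles: the determinant lemma handles the $S$-block, and the condition $v\notin\mathrm{row}(A)$ protects the global rank.
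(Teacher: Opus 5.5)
Your proof is correct and follows the paper's argument essentially step for step: a rank-one update $R + c\,v^\top$ of the row factor, the matrix determinant lemma to make the $S$-block singular, and the hypothesis $v\notin\mathrm{row}(A)$ to keep the rank equal to $k$. The differences are cosmetic: you use a general full-rank factorization rather than the economized QR, you give $c$ explicitly, and you rule out any drop in rank rather than only a drop to $k-1$.
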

\begin{proof}
Write the economized QR factorization of $A = QR$, where $Q \in \R^{m \times k}$ has orthonormal columns, and $R \in \R^{k \times n}$. Because 
\[
    A_{:, j} = Q R_{:, j} \iff A_{:, S} = Q R_{:, S},
\]
we have that $\text{col}(A_{:, S}) = \text{col}(A)$ if and only if $R_{:, S} \in \R^{k \times k} $ is invertible. We  construct a rank-$k$ matrix $B\in\R^{m \times n} $ satisfying $BX = AX$, but $B = Q\wt R$, where $\wt R_{:, S}$ is singular. 

Because $v_S \neq 0$ and $R_{:, S}$ is invertible, we can find a vector $u \in \R^{k}$ such that $v_S^\top R_{:, S}^{-1} u = -1$. Then, define $\wt R$  as the rank-1 update $R + uv^\top$, so that
\[
B = Q \wt R = Q \, (R + uv^\top) = A + Q\,uv^\top .
\]
Moreover,  $\text{rank}(B) = k$ because $v \not \in \text{row}(A)$. To see why this is true, we prove the contrapositive. If $\text{rank}(B) = k-1$, then $\text{rank}(R + uv^\top) = k-1$, so $R + uv^\top$ has a nontrivial left nullspace, i.e., there exists a vector $w \in \R^k$ such that $w^\top (R + uv^\top) = w^\top R + (w^\top u) v^\top = 0$. It must be the case that $w^\top u\neq 0$, otherwise $w^\top R = 0$, which contradicts the fact that $R$  has full row-rank. Then, solving for $v$ shows that it is contained in $\text{row}(R) = \text{row}(A)$.
 
By the formula for the determinant of a rank-one update,
\[
\det(\wt R_{:, S} ) = \det(R_{:, S}) (1 + v_S^\top R_{:, S}^{-1} \, u) = 0.
\]
Hence, $\wt R_{:, S}$ is singular, so $\text{col}(B_{:, S}) \neq \text{col}(B)$.
\end{proof}

We explain in the following remark that the assumptions about $X$ in~\Cref{prop:css_identifiability} are necessary for non-identifiability to hold.

\begin{remark} \rm
In~\Cref{prop:css_identifiability}, the conditions that $v_S \neq 0$ and $v \not \in \text{row}(A)$ are necessary.  If for all $v \in \R^n$ satisfying $v^\top X = 0$, $v_S = 0$, then  $\text{span} \{ e_j : j \in S \} \subset \text{col}(X)$, where $e_j$ is the $j$th elementary basis vector. Then, for each $j \in S$, there exists a $y_j$ such that $e_j = X y_j$, and so $A_{:, j} = Ae_j =  (AX)y_j$ is determined by the transcript. Thus, the selected columns $A_{:, S}$ are identifiable from the transcript.  In this case, one got lucky and queried the  columns in $S$ up to linear combination,  only needing $k$  queries to identify  $S$.
   
Moreover, we require that $v \not \in \text{row}(A)$ to ensure that $B$ is still rank-$k$. If $v \in \text{row}(A)$, consider the case where $k = 1$ and $q =n-1$, so any rank-1 matrix $wz^\top$ satisfies $wz^\top X = 0$, where $w \in \R^m$ and $z \in \R^n$.  Since $X \in \R^{n \times (n-1)}$, the direction of $z$ is uniquely determined, i.e., $z = \alpha a$ for some $a \in \R^n$, $\alpha \neq 0$. Because $wz^\top$ is rank-1, any nonzero column of $wz^\top$ spans $\text{col}(wz^\top)$, and this is given by any $s \in \{1, \dots, n\}$ for which $a_s \neq 0$. Thus, $s$ is  identifiable from the query $uv^\top X = 0$.
\end{remark}

We have shown that, excluding two edge cases, low-rank approximation via column subset selection also fundamentally relies on transpose information, in the sense that the set of columns spanning the range of the matrix is not identifiable from fewer than $n$ forward queries. In the following section, we address the related, more challenging problem of finding a maximum volume submatrix.  

\section{Local maximum volume submatrix}\label{sec:css}

In this section, we consider identifiability for finding the local maximum volume submatrix in a TF manner. To describe the maxvol problem, for $S\subset\{1,\dots,n\}$ with $|S|=k$, write
\[
A_S = A_{:,S},
\qquad
\operatorname{vol}(A_S)
=
\textstyle \prod_{\ell=1}^k \sigma_\ell(A_S)
=
\det(A_S^\top A_S)^{1/2}.
\]
The global maxvol problem is to find
\[
\max_{S \subset \{1, \dotsc, n\}, \, |S| = k} {\rm vol}(A_S).
\]
This problem is closely related to the problem of low-rank approximation via  column subset selection described in the previous section. There are many contexts where one wishes to find a representative or dominant submatrix~\cite{goreinov2010find} and a variety of techniques using volume sampling/determinantal point processes~\cite{deshpande2010efficient} and leverage scores.  While the global maxvol problem is NP-complete~\cite{shitov2021column}, it has been recently shown that the problem of finding the near-local maxvol of a matrix $A$ is theoretically necessary for Gaussian elimination or the QR algorithm to be rank-revealing~\cite{damle2025estimating}. Moreover, this work proposes a simple greedy algorithm for this problem. The local maxvol solution also results in a choice of column indices that yield a near-optimal low-rank approximation, thus solving the column subset selection problem. However, an important distinction is that while column subset selection can achieve good low-rank approximation error in terms of the trailing singular values (as proven for, e.g.,  the methods of~\cite{osinsky2025close} and~\cite{cortinovis2026adaptive}), it does not necessarily guarantee a near-local maximum volume pivot or good singular value estimates. 

The local maxvol problem is defined as follows. For a size-$k$ set $S$, define its one-swap neighborhood by
\[
\caln_1(S)
=
\bigl\{(S\,\setminus\{i\})\cup\{j\}: i\in S,\ j\notin S\bigr\}.
\]
For $\gamma\ge 1$, we say that $S$ is a $\gamma$-local maximum-volume
subset of $A$ if
\begin{equation}\label{eq:gamma-local-maxvol}
\operatorname{vol}(A_T)
\le
\gamma \cdot \operatorname{vol}(A_S)
\qquad
\text{for every } T\in \caln_1(S).
\end{equation}
The case $\gamma=1$ is ordinary one-swap local maxvol.

\subsection{Identifiability of local maximum volume}

We now consider identifiability for the local maxvol problem.
\begin{proposition}
\label{prop:css-near-local-maxvol-nonidentifiability}
Let $A\in\R^{m\times n}$ satisfy $\operatorname{rank}(A)>k$, and let
$X\in\R^{n\times q}$ have full column rank with $q\le n-1$. Let
$\wh S\subset\{1,\dots,n\}$, $|\wh S|=k$, satisfy
$
\operatorname{vol}(A_{\wh S})>0.
$
Then, for every $\gamma\ge 1$, there exists a matrix $B\ne A$ such that
$
BX=AX,
$
but $\wh S$ is not a $\gamma$-local maximum-volume subset of $B$.
More precisely, there exists a one-swap neighbor
$T\in\caln_1(\wh S)$ such that
\[
\operatorname{vol}(B_T)
>
\gamma \cdot \operatorname{vol}(B_{\wh S}).
\]
\end{proposition}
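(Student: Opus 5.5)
We will use the mechanism from the proposition in \Cref{sec:background}: any $B$ of the form $B = A + u v^\top$ with $v^\top X = 0$ satisfies $BX = AX$. Since $q \le n-1$, pick a unit vector $v \in \operatorname{null}(X^\top)$. The plan is to perturb $A$ only through this direction so that one swapped set $T$ gains a lot of volume while $B_{\wh S}$ stays essentially unchanged or shrinks.

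\textbf{Choosing the swap.} The vector $v$ is nonzero, so it has a nonzero entry $v_j$. We distinguish two cases.

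\emph{Case (a): some $j \notin \wh S$ has $v_j \neq 0$.} This is the simpler case. Take $u \in \R^m$ orthogonal to $\operatorname{col}(A_{\wh S})$ and choose any $i \in \wh S$. This is possible when $k < m$, which holds because $\operatorname{rank}(A) > k$ forces $m > k$. Set $B = A + \tau u v^\top$ and $T = (\wh S \setminus \{i\}) \cup \{j\}$.
\begin{itemize}
\item The columns of $B_{\wh S}$ are $A_{:,\ell} + \tau v_\ell u$ for $\ell \in \wh S$.
\item Column $j$ of $B_T$ is $A_{:,j} + \tau v_j u$. Its component orthogonal to the span of the other $k-1$ columns of $B_T$ grows like $|\tau v_j|$.
\end{itemize}
The difficulty is that $B_{\wh S}$ also changes when $v_{\wh S} \neq 0$. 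To control this, write $\operatorname{vol}(B_T)$ as $\operatorname{vol}$ of the $k-1$ retained columns times the distance of the new column to their span, and do the same for $B_{\wh S}$. Both volumes are then at most linear in $\tau$, because $B_{\wh S} = A_{\wh S} + \tau u v_{\wh S}^\top$ is a rank-one update and $u \perp \operatorname{col}(A_{\wh S})$. So a pure scaling argument in $\tau$ does not immediately beat $\gamma$.

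\textbf{Planned fix for case (a).} Choose $i \in \wh S$ and combine the rank-one perturbation with a second one supported on $\operatorname{null}(X^\top)$ that kills the $\wh S$ dependence. The cleanest route is to project $v$ so that $v_{\wh S}=0$. This requires $\operatorname{null}(X^\top)$ to contain a vector vanishing on $\wh S$ but not on $j$, which is not guaranteed when $q = n-1$. So instead compute directly: $\operatorname{vol}(B_{\wh S})^2 = \operatorname{vol}(A_{\wh S})^2 + \tau^2 \|u\|^2 \operatorname{vol}(A_{\wh S})^2\, \| \text{(component)}\|^2$ by the Cauchy--Binet / block-orthogonality formula. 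Do the same for $B_T$, and compare leading $\tau^2$ coefficients. Choose $i \in \wh S$ to maximize the ratio, e.g., take $i$ with $v_i$ such that the leading coefficient for $T$ strictly exceeds $\gamma^2$ times that for $\wh S$. If leading-coefficient comparisons fail, fall back on the freedom in $u$'s norm and use a two-parameter family.

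\emph{Case (b): $v$ is supported inside $\wh S$.} Pick $i \in \wh S$ with $v_i \neq 0$. Perturb by $B = A + u v^\top$ with $u$ chosen as in \Cref{prop:css_identifiability}, so that $\det$-type degeneracy occurs: $B_{\wh S}$ drops rank, making $\operatorname{vol}(B_{\wh S}) = 0$. Concretely, choose $u = -A_{\wh S} w$ with $v_{\wh S}^\top w = 1$, so that $B_{\wh S} = A_{\wh S}(I - w v_{\wh S}^\top)$ is singular. Then show some $T \in \caln_1(\wh S)$ has $\operatorname{vol}(B_T) > 0$, which gives the strict inequality for every $\gamma$. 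This uses $\operatorname{rank}(B) \ge \operatorname{rank}(A) - 1 \ge k$ together with the exchange property: a rank-deficient $k$-subset inside a matrix of rank $\ge k$ can have one column swapped to increase rank, provided $\operatorname{rank}(B_{\wh S}) = k-1$ exactly. That condition holds because the update is rank one.

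The same zeroing trick also settles case (a) whenever $v_{\wh S} \neq 0$, and this gives a uniform argument. If $v_{\wh S} \neq 0$, zero out $\operatorname{vol}(B_{\wh S})$ as in case (b). If $v_{\wh S} = 0$, then $B_{\wh S} = A_{\wh S}$, and the large-$\tau$ growth of column $j$ with $u \perp \operatorname{col}(A_{\wh S})$ gives $\operatorname{vol}(B_T) \to \infty$. Finally, $B \ne A$ since $uv^\top \neq 0$.

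\textbf{Main obstacle.} The hard step is verifying, in the zeroing case, that $\operatorname{rank}(B) \ge k$ and that $\operatorname{rank}(B_{\wh S}) = k-1$ is completed to rank $k$ by a single swap. The plan is to argue by matroid exchange, using $\operatorname{rank}(A) \ge k+1$ and the fact that a rank-one update lowers rank by at most one.
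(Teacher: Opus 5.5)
Your final uniform argument is correct and follows the paper's proof. You split on whether $v$ vanishes on $\wh S$. When it does not, a rank-one update $uv^\top$ collapses $\operatorname{vol}(B_{\wh S})$ to zero; when it does, $u\perp\operatorname{col}(A_{\wh S})$ blows up a swapped column.

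The only variation is how you find the good neighbor $T$ in the first case. You take a general $u=-A_{\wh S}w$ with $v_{\wh S}^\top w=1$ and use matroid exchange, via $\operatorname{rank}(B)\ge\operatorname{rank}(A)-1\ge k$ and $\operatorname{rank}(B_{\wh S})=k-1$. The paper instead takes $u=-v_i^{-1}a_i$ to zero out column $i$. It then checks by hand that $T=(\wh S\setminus\{i\})\cup\{j\}$, with $a_j\notin\operatorname{col}(A_{\wh S})$, has independent columns.

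You should simply delete the earlier leading-coefficient ``planned fix'' for case (a). With $u\perp\operatorname{col}(A_{\wh S})$ and $v_{\wh S}\neq 0$, both volumes grow at most linearly in $\tau$ while $\operatorname{vol}(B_{\wh S})\ge\operatorname{vol}(A_{\wh S})>0$. Their ratio therefore stays bounded and cannot be forced above an arbitrary $\gamma$.
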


\begin{proof}
Let
$
N=\operatorname{null}(X^\top).
$
Since $q\le n-1$, the subspace $N$ is nontrivial. We consider two cases.

{\bf Case 1:} First suppose there exists $v\in N$ whose restriction to $\wh S$ is not
identically zero. Pick $i\in \wh S$ such that $\alpha := v_i\ne 0$. Since
$\operatorname{vol}(A_{\wh S})>0$, the columns of $A_{\wh S}$ are
linearly independent. Since $\operatorname{rank}(A)>k$, there exists
$j\notin \wh S$ such that
$
a_j\notin \operatorname{col}(A_{\wh S}),
$
where $a_j$ denotes the $j$th column of $A$.

Define 
\[
u= - \alpha^{-1} \, a_i,
\qquad
B=A+u \, v^\top.
\]
Because $v\in N$, we have $v^\top X=0$, and hence $BX=AX+u \, v^\top X=AX$.
Moreover, $b_i=a_i+\alpha \, u=a_i-a_i=0$, so $B_{\wh S}$ has a zero column.
Therefore
\begin{equation}\label{eq:collapsed-selected-volume}
\operatorname{vol}(B_{\wh S})=0.
\end{equation}
Now define the one-swap neighbor
$T=(\wh S\,\setminus\{i\})\cup\{j\}$.
We claim that $B_T$ has full column rank. Suppose that $\alpha_\ell, \beta \in \R$ satisfy
\[
\textstyle \sum_{\ell\in \wh S\,\setminus\{i\}} \alpha_\ell \, b_\ell
+ \beta \, b_j = 0.
\]
Since
$b_\ell = a_\ell-\frac{v_\ell}{v_i} \, a_i$ for every $\ell$, the preceding relation becomes
\[
\textstyle \sum_{\ell\in \wh S\,\setminus\{i\}}\alpha_\ell \, a_\ell
+ \beta \, a_j - v_i^{-1} \,
\big(
\sum_{\ell\in \wh S\,\setminus\{i\}} \alpha_\ell \, v_\ell
+ \beta \, v_j \big) \, a_i = 0.
\]
All terms except $\beta \, a_j$ lie in $\operatorname{col}(A_{\wh S})$.
Since $a_j\notin \operatorname{col}(A_{\wh S})$, we must have
$\beta=0$. The remaining relation is then a linear dependence among the
columns of $A_{\wh S}$, which are linearly independent. Hence all
$\alpha_\ell=0$. Thus $B_T$ has linearly independent columns and hence,
$
\operatorname{vol}(B_T)>0.
$
Combining this with~\eqref{eq:collapsed-selected-volume}, we get
\[
\operatorname{vol}(B_T)
>
\gamma \cdot \operatorname{vol}(B_{\wh S})
\]
for every $\gamma\ge 1$. Hence $\wh S$ is not a
$\gamma$-local maximum-volume subset of $B$.

{\bf Case 2:} Every $v\in N$ vanishes on
$\wh S$. In this case, the columns indexed by $\wh S$ are already
determined by the transcript $AX$. Indeed, for each $i\in \wh S$, the
condition $v_i=0$ for every $v\in N$ implies
\[
e_i\in N^\perp=\operatorname{col}(X).
\]
Therefore, if $B X=A X$, then
$
b_i-a_i=(B-A) \, e_i=0$ for every $i\in \wh S$. 
Thus one cannot, in general, force $B_{\wh S}$ to have zero volume in
this case. Instead, we must make a neighboring subset of columns have a large volume. To this end, choose a nonzero $v\in N$. Since every vector in $N$
vanishes on $\wh S$, but $v\ne 0$, there exists $j\notin \wh S$ such
that $v_j\ne 0$. Pick any $i\in \wh S$. Because $\operatorname{vol}(A_{\wh S})>0$, the matrix $A_{\wh S\,\setminus\{i\}}$ has full column
rank. Choose a unit vector
$
u\perp \operatorname{col}(A_{\wh S\,\setminus\{i\}}),
$
and, for $\tau \in\R$, define
$
B_{\tau} = A+\tau \, u \, v^\top.
$
Again, since $v^\top X=0$, we have
$
B_{\tau} X=AX.
$
Moreover, $v$ vanishes on $\wh S$, so the selected columns are unchanged:
$
(B_{\tau})_{\wh S}=A_{\wh S}.
$
Hence
\begin{equation}\label{eq:selected-volume-fixed}
\operatorname{vol}\bigl((B_{\tau})_{\wh S}\bigr)
=
\operatorname{vol}(A_{\wh S})
>
0.
\end{equation}
Now consider the one-swap neighbor
$
T=(\wh S\,\setminus\{i\})\cup\{j\}.
$
The columns of $(B_{\tau})_T$ consist of the columns of $A_{\wh S\,\setminus\{i\}}$ together with
$
a_j+\tau \, v_j \, u.
$

By the Schur complement formula,
\[
\operatorname{vol}((B_{\tau})_T)
=
\operatorname{vol}(A_{\wh S\,\setminus\{i\}}) \cdot
\|P_{\operatorname{col}(A_{\wh S\,\setminus\{i\}})^\perp} (a_j+\tau \, v_j \, u)\|.
\]
where we use the convention that $\det((A_{\wh S\,\setminus\{i\}})^\top A_{\wh S\,\setminus\{i\}})=1$ when $|\wh S|=1$. Since
$u\in \operatorname{col}(A_{\wh S\,\setminus\{i\}})^\perp$  and $v_j\ne 0$, the final factor grows as $|\tau \, v_j|$  as $|\tau|\to\infty$.
Therefore
$\operatorname{vol}\bigl((B_{\tau})_T\bigr)\to\infty$
as $|\tau|\to\infty$. Using~\eqref{eq:selected-volume-fixed}, we can choose $|\tau|$ sufficiently
large that
\[
\operatorname{vol}\bigl((B_{\tau})_T\bigr)
>
\gamma \cdot \operatorname{vol}(A_{\wh S})
=
\gamma \cdot \operatorname{vol}\bigl((B_{\tau})_{\wh S}\bigr).
\]
Taking $B=B_{\tau}$ gives the desired matrix in the second case.
\end{proof}

This proposition is a pure identifiability result: the issue is not computational efficiency, but the fact that the forward transcript does not determine the correct subset. Even exact optimization is impossible when distinct matrices agree on all queried products but induce different optimal column choices.

\section{Schatten-$p$ norm estimation}\label{sec:schatten}
We next consider Schatten-norm estimation. For $p \ge 1$, let
\[
\|A\|_{S_p} = \big( \textstyle \sum_i \sigma_i^p(A) \big)^{1/p}
\]
denote the Schatten-$p$ norm. For even $p$, we have $\|A\|_{S_{p}}^{p} = \operatorname{tr}((A^\top \! A)^{p/2})$. Important examples of this quantity include the nuclear norm (Schatten-1), Frobenius norm (Schatten-2), and spectral norm ($p \to \infty$). 

Schatten-$2p$ norm estimation from samples is well-studied; in the context of covariance matrices, it is used via the method of moments to approximate the eigenvalues from observed samples drawn from the covariance matrix's corresponding distribution~\cite{kongvaliantspectrum2017}. This algorithm produces an unbiased estimator for the Schatten-$2p$ norm using only one-sided matrix-vector products, and the variance of this estimator can be further reduced under additional assumptions, such as rapid decay of singular values~\cite{chucortinovisimproved2025}. In the case of Frobenius norm estimation ($2p=2$), this algorithm reduces to Hutchinson's estimator applied to $A^\top A$. 

The query complexity of Schatten-norm and spectral norm estimation is studied in~\cite{linguyenwoodruffon2014}. They consider the bilinear sketching model, where there is a distribution over $r \times n$ matrices $S$ and $n \times s$ matrices $T$ so that one observes $SAT$ to approximate $\|A\|$ within a constant factor. For even $p \geq 4$, they  obtain an $\varepsilon$-approximation to $\|A\|_{S^p}$ with $rs = \mathcal O(n^{2 - 4/p})$, which is also shown to be optimal in $n$ and $p$ dependence with a matching lower bound. Given this constraint on $rs$, if one wants to minimize the total number of queries $r + s$, the solution is  $r = s = \mathcal O(n^{1 - 2/p})$. We note that this result can be converted into a TF  bound as follows. A non-adaptive TF algorithm using $q$ forward matvecs in this form requires $S$ to be invertible, so $ r = n$ and $s = q$. Thus, substituting $rs = nq$, a TF algorithm using non-adaptive, randomized queries of the form $AT$ requires $
q = \Omega(n^{1 - 4/p})$ queries for even $p > 4$. It is worth noting that the number of forward queries needed for spectral norm estimation, which can be recovered from $\| A \|_{S^p}$ as $p \to \infty$, is $n$; see a related discussion in~\cite[Section 5]{martinsson2020randomized}. In this section, we obtain a more general lower bound for possibly adaptive queries. However, we lose the descriptive dependence on $p$.

We focus first on Frobenius norm estimation, where transpose access leads to the classical contrast between Hutchinson's estimator and Hutch++. Hutchinson's estimator uses only one-sided information, while Hutch++ exploits a low-rank correction that relies on both $A$ and $A^\top$. The following theorem shows that this difference is not merely algorithmic: the improved $\eps^{-1}$ dependence is impossible in the TF model.

\begin{theorem}\label{thm:frobenius-lower-bound}
There exists a constant $\eta >0$ such that for every $0<\eps\le \eta$ and all sufficiently large $n \gtrsim \eps^{-2}$, any randomized, adaptive algorithm that outputs $\widehat a$ satisfying
\[
|\,\widehat a-\|A\|_F\,| \le \eps \, \|A\|_F
\]
with probability at least $\tfrac23$ for every $A \in \R^{n \times n}$ must use $q = \Omega(\eps^{-2})$ queries.
\end{theorem}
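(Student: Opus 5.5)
By Yao's minimax principle (\Cref{sec:yao}) it suffices to exhibit a hard distribution on $n\times n$ matrices and show that every deterministic, adaptive algorithm making $q = o(\eps^{-2})$ forward queries fails with probability more than $\tfrac13$. The plan is to use rank-one matrices $A = e_1 g^\top$, as in \Cref{thm:lra-lower-bound}. Every forward query then reveals exactly one scalar, $g^\top x$. Also $\|A\|_F = \|g\|$, so the task reduces to estimating the norm of a hidden vector from $q$ adaptive linear measurements.

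The hard distribution is a two-point mixture over scales. With probability $\tfrac12$ take $g \sim \caln(0, I_n)$, and with probability $\tfrac12$ take $g \sim \caln(0, (1+c\eps)^2 I_n)$, for a suitable absolute constant $c$ (say $c = 4$). Since $n \gtrsim \eps^{-2}$, $\|g\|$ concentrates at $\sqrt n$ or $(1+c\eps)\sqrt n$ up to relative fluctuations $O(n^{-1/2}) \le \eps$. Hence, with high probability, a relative-$\eps$ estimate of $\|A\|_F$ identifies the scale, and any successful algorithm yields a test distinguishing the two hypotheses with probability at least roughly $\tfrac23 - o(1)$. This is exactly where the condition $n \gtrsim \eps^{-2}$ enters.

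Next I would bound what the transcript reveals. Since $g$ is rotationally invariant under both hypotheses, I may assume without loss of generality that the deterministic algorithm orthonormalizes its queries. Redundant queries carry no new information, and the response to a query in the span of earlier ones is determined. Suppose $x_i$ is a unit vector orthogonal to $x_1,\dots,x_{i-1}$ and chosen as a function of $g^\top x_1,\dots,g^\top x_{i-1}$. Then, under either hypothesis, $g^\top x_i$ is a fresh Gaussian with variance $1$ or $(1+c\eps)^2$, independent of the past. So the transcript is distributed as $q$ i.i.d.\ samples from $\caln(0,1)$ or from $\caln(0,(1+c\eps)^2)$, regardless of adaptivity. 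The KL divergence between these product measures is $q\cdot O(\eps^2)$. By Pinsker's inequality, the total variation distance is $O(\eps\sqrt q)$, which is too small for a $\tfrac23$-success test unless $q = \Omega(\eps^{-2})$.

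The main obstacle is the reduction in the first step. I need to check carefully that relative accuracy $\eps$ with probability $\tfrac23$ under the mixture really gives a test with advantage bounded below, after accounting for the concentration failure events of $\|g\|$. This means choosing $c$ and $\eta$ so that the two intervals $[(1\pm\eps)(1\pm\delta)\sqrt n]$ and $[(1\pm\eps)(1+c\eps)(1\pm\delta)\sqrt n]$ are disjoint, where $\delta = O(n^{-1/2})$ is the concentration width. The adaptivity argument via chaining Gaussian conditionals is routine by comparison.
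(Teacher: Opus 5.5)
Your proposal is correct and follows essentially the same route as the paper. Both use a two-hypothesis rank-one construction $e_1 g^\top$ with Gaussian $g$ at scales $1$ and $1+4\eps$, and $\chi^2_n$ concentration (where $n \gtrsim \eps^{-2}$ enters) to turn a relative-$\eps$ estimator into a test. Both then reduce to orthonormal adaptive queries, so the transcript is $q$ i.i.d.\ Gaussians, and finish with a KL bound of $\calo(q\eps^2)$ and Pinsker's inequality. The differences are cosmetic: the paper normalizes by $1/\sqrt n$ and fixes the concentration width at $\eps/4$, which is exactly the constant choice you flagged as making the two acceptance intervals disjoint for $\eps \le \eta$.
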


\begin{proof}
By Yao's minimax principle, it suffices to analyze deterministic adaptive algorithms against a two-hypothesis distribution. 
Consider the two hypotheses 

\[
H_0:\quad A = \tfrac{1}{\sqrt n} \, e_1 \, g^\top,
\qquad
H_1:\quad A = \tfrac{1+4\eps}{\sqrt n} \, e_1 \, h^\top,
\qquad g, h\sim \caln(0,I_n) \text{ iid}.
\]
Fix a deterministic, adaptive algorithm making $q$ queries. Since each response is a scalar multiple of $e_1$, the algorithm loses no information by replacing each query direction by its component orthogonal to the previous ones and normalizing; thus we may assume the queried vectors $x_1,\dots,x_q$ are orthonormal.

Under both $H_0$ and $H_1$, the transcript consists of the $q$ scalars
\[
\tfrac{\sigma_i}{\sqrt n} \, g^\top x_1, \dots, \tfrac{\sigma_i}{\sqrt n} \, g^\top x_q,
\qquad \sigma_0=1, \quad \sigma_1=1+4\eps,
\]
which are i.i.d.~Gaussian random variables with law $\caln(0, \, n^{-1} \, \sigma_i^2)$. The same is true for $H_1$. Meanwhile, under both hypotheses, 
\[
\|A\|_F = \tfrac{\sigma_i}{\sqrt n} \, \|z\|, \quad z \sim \mathcal N(0, I_n).
\]
By concentration of the $\chi^2_n$ distribution, if $n \gtrsim \eps^{-2}$, then with probability at least $0.99$,
\[
(1- \tfrac14 \, \eps) \, \sigma_i \le \|A\|_F \le (1+ \tfrac14 \, \eps) \, \sigma_i
\]
simultaneously under both hypotheses.

Therefore, any estimator achieving relative error at most $\eps$ with probability at least $\tfrac23$ yields a test that distinguishes $H_0$ from $H_1$ with constant bias. On the other hand, the Kullback--Leibler divergence between the two transcript distributions is
\[
\begin{array}{l}
\mathrm{KL}\!\left(\caln(0, \, n^{-1} \, \sigma_0^2)^{\otimes q} \ \middle\| \ \caln(0, \, n^{-1} \, \sigma_1^2)^{\otimes q}\right) \\[2mm]
\phantom{MMMMMMMMM} = q \cdot \mathrm{KL}\!\left(\caln(0,1) \ \middle\| \ \caln(0,(1+4\eps)^2)\right)
= \calo(q\,\eps^2).
\end{array}
\]
Pinsker's inequality\footnote{We gratefully thank Chris and Cameron Musco for their help on this topic.}
therefore shows that if the test has constant success probability, then $q = \Omega(\eps^{-2})$. This proves the lower bound when $n \gtrsim \eps^{-2}$; if $n$ is not much larger than $\varepsilon^{-2}$, one can always recover $A$ using $q = n$ matvecs. Thus, up to constants, Hutchinson's estimator's query complexity is  optimal among all TF algorithms.
\end{proof}
This theorem separates two phenomena that are often conflated. Hutchinson's method is already optimal in the one-sided model, whereas the improved complexity of Hutch++ for estimating $\|A\|_F$ fundamentally relies on access to both $A$ and $A^\top$. In the following remark, we distinguish this result from theory surrounding improved variants of Hutch++, which also address the trade-off between matvecs with $A$ and $A^\top$. 

\begin{remark} \rm
Hutch++ relies on a combination of low-rank approximation (using matvecs with both $A$ and $A^\top$) and stochastic trace estimation (using matvecs only with $A$, as in Hutchinson's estimator).  In~\cite{persson2022improved}, the optimal split between the two phases of the algorithm is studied. In particular, if $A$ exhibits rapid singular value decay, it admits a more accurate low-rank approximation, and therefore requires fewer two-sided matvecs. However, this only saves a constant number of matvecs with $A^\top$, and there is not an asymptotic difference in the number of queries to $A$ and $A^\top$. 
\end{remark}

The following corollary extends~\Cref{thm:frobenius-lower-bound} to general Schatten norms. Intuitively, as $p \to \infty$, estimating $\|A\|_{S_p}$ becomes harder because one requires more information about $A$'s spectrum. However, the following lower bound is independent of $p$, and therefore  very loose as $p$ grows. 

\begin{corollary}\label{cor:schatten-rank-one}
Let $p \in [1,\infty]$,  $A \in \R^{n \times n}$, and $\varepsilon > 0$. For all sufficiently large $n \gtrsim \eps^{-2}$, any randomized, adaptive algorithm that estimates $\|A\|_{S_p}$ to relative accuracy $\eps$ with constant success probability on every $A$ must use $q = \Omega(\eps^{-2})$ forward matrix-vector products.

\end{corollary}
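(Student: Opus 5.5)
The plan is to reduce directly to the hard instance used in the proof of \Cref{thm:frobenius-lower-bound}. Every matrix in that two-hypothesis distribution has the form $A = c\, e_1 h^\top$, so it has rank one. A rank-one matrix has a single nonzero singular value $\sigma_1(A) = \|A\| = \|A\|_F$, and therefore
\[
\|A\|_{S_p} = \big(\sigma_1(A)^p\big)^{1/p} = \|A\|_F \quad \text{for every } p \in [1,\infty],
\]
where we read $p=\infty$ as the spectral norm. Consequently, on the support of the hard distribution, an algorithm that estimates $\|A\|_{S_p}$ to relative accuracy $\eps$ is literally an algorithm that estimates $\|A\|_F$ to relative accuracy $\eps$.

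The steps, in order, are as follows.
\begin{enumerate}
\item Fix a randomized, adaptive TF algorithm that estimates $\|A\|_{S_p}$ to relative accuracy $\eps$ with success probability at least $\tfrac23$ on every $A \in \R^{n\times n}$.
\item Apply Yao's minimax principle (\Cref{sec:yao}) with the same distribution over $H_0$ and $H_1$ as in \Cref{thm:frobenius-lower-bound}. Restricted to that distribution, the algorithm's output satisfies $|\widehat a - \|A\|_F| \le \eps\|A\|_F$ with probability at least $\tfrac23$.
\item Reuse the remainder of the proof of \Cref{thm:frobenius-lower-bound} verbatim. By $\chi^2_n$ concentration, $\|A\|_F$ lies within a factor $1\pm \eps/4$ of $\sigma_i$ when $n \gtrsim \eps^{-2}$, so the estimator yields a test distinguishing $H_0$ from $H_1$ with constant bias. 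The KL divergence between the two transcript laws is $\calo(q\eps^2)$, and Pinsker's inequality then forces $q = \Omega(\eps^{-2})$.
\end{enumerate}
There is a cleaner way to phrase this black-box reduction. The theorem's proof only ever uses inputs from the hard distribution, so an $S_p$-estimator that works on all inputs in particular works on those inputs. On them it coincides with a Frobenius estimator, and the lower bound transfers.

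The one point that needs care is the quantifier on $\eps$. \Cref{thm:frobenius-lower-bound} assumes $\eps \le \eta$, whereas the corollary says only $\eps > 0$. I would handle this in one of two ways. The first is to read the corollary with the same implicit restriction $\eps \le \eta$, which costs nothing because $\Omega(\eps^{-2})$ is a statement about small $\eps$. The second is to note that for $\eps > \eta$ the bound $\Omega(\eps^{-2})$ is at most a constant, which is trivially met since at least one query is needed: with zero queries the transcript is empty, so the output cannot scale with $\|A\|$.

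I expect no real obstacle. The only other check is that the separation $1+4\eps$ between the hypotheses, together with the concentration margin $\eps/4$, is unaffected because the norms coincide exactly on rank-one matrices.
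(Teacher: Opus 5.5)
Your reduction is exactly the paper's proof: the hard instances in \Cref{thm:frobenius-lower-bound} are rank one, so every Schatten norm equals $\|A\|_F$ on them, and the Frobenius lower bound transfers verbatim. One small slip is in your fallback for $\eps > \eta$: it fails once $\eps \ge 1$, because outputting $\widehat a = 0$ has relative error exactly $1$ and needs no queries, so the corollary should be read with the same implicit restriction $\eps \le \eta$ as in your first option.
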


\begin{proof}
On rank-one matrices, all Schatten norms are equal to the unique nonzero singular value. Hence, the Frobenius norm lower bound applies verbatim.
\end{proof}
The corollary shows that, for lower bounds, the Frobenius norm already captures the difficulty of estimating every Schatten norm in the TF model. At least at the level of worst-case query complexity, the rank-one case is sufficient to transfer the obstruction to the whole Schatten family.

\section{Least squares problems}\label{sec:least-squares}
We conclude the main body with a concrete non-identifiability result for overdetermined least squares problems; we note that a preliminary version of this result was included in the first author's dissertation~\cite[Prop.~5.2.1]{halikias2025structured}. We consider the least squares problem $Qx = b$, where $b$ is known and $Q$ is only accessed via forward matrix-vector products. We note this setting arises because $A$ may be too large to store, but one can still store the right-hand side $b$. Even in the simplest orthonormal setting, where the solution is just $Q^\top b$, we show that forward matvecs alone do not determine the minimizer. This result complements recent lower bounds on the query complexity of approximately solving linear systems~\cite{derezinski2026matrix}.
Let $\kappa(B) = \sigma_{\max}(B) \, / \, \sigma_{\min}(B)$ be the condition number of $B$, where $\sigma_{\max}$ and $\sigma_{\min}$ are the largest and smallest singular values of $B$, respectively. 

\begin{proposition}\label{prop:ls-nonuniqueness}
Let $Q \in \R^{m \times n}$ have orthonormal columns, and let
\[
x = \argmin_{z\in \R^n}\|Qz-b\| = Q^\top b.
\]
Let $X \in \R^{n \times q}$ have orthonormal columns with $q<n$ and suppose that $x \not \in \text{col}(X)$. Then, there exists a matrix $B \in \R^{m \times n}$ such that $BX = QX$ and  $\kappa(B) =1$, but the least squares solution
\[
y = \argmin_{z\in \R^n}\|Bz-b\|
\]
satisfies $\|y - x\| = 2 \cdot \text{dist}(x, \text{col}(X))$. In particular, $y \neq x$. 
\end{proposition}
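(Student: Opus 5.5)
Write $P = XX^\top$ for the orthogonal projector onto $\mathrm{col}(X)$, and decompose $x = Px + r$, where $r = (I-P)x \neq 0$ and $\|r\| = \mathrm{dist}(x,\mathrm{col}(X))$. The idea is to take $B = QH$, where $H \in \R^{n\times n}$ is an orthogonal matrix that fixes $\mathrm{col}(X)$ pointwise and reflects the direction $r$. Concretely, let $w = r/\|r\|$ and set
\[
H = I - 2ww^\top ,
\]
the Householder reflector across $w^\perp$.

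First, check that the transcript is preserved. Since $w \perp \mathrm{col}(X)$, we have $HX = X$, so $BX = QHX = QX$. This is exactly the form $B = Q\,Q_1Q_1^\top + ZQ_2^\top$ described by the proposition in \Cref{sec:background}, with $Z$ determined by $QH$ on $\mathrm{col}(X)^\perp$.

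Second, check the conditioning. Because $H$ is orthogonal, $B^\top B = H^\top Q^\top Q H = I$. Hence $B$ has orthonormal columns and $\kappa(B) = 1$.

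Third, compute the least squares solution. Since $B$ has orthonormal columns, the minimizer is unique and
\[
y = B^\top b = H^\top Q^\top b = Hx .
\]
Using $w^\top x = w^\top r = \|r\|$ (because $Px \perp w$), we get $Hx = x - 2\|r\|\,w = x - 2r$. Therefore $\|y - x\| = 2\|r\| = 2\,\mathrm{dist}(x,\mathrm{col}(X))$, which is positive by the assumption $x \notin \mathrm{col}(X)$, so $y \neq x$.

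There is no substantial obstacle here. The only points needing care are that $w$ is orthogonal to all columns of $X$, which holds because $r$ is the residual of an orthogonal projection, and that $w^\top x = \|r\|$ exactly. The hypothesis that $X$ has orthonormal columns makes $P = XX^\top$; for general full-column-rank $X$ one would use $P = X(X^\top X)^{-1}X^\top$ instead.
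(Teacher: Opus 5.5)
Your proposal is correct and is essentially the paper's proof: the paper takes the same unit vector $v = (I-XX^\top)x/\|(I-XX^\top)x\|$, sets $B = Q(I-2vv^\top)$, and reads off $y = B^\top b = (I-2vv^\top)x$, so that $\|y-x\| = 2|v^\top x| = 2\,\mathrm{dist}(x,\mathrm{col}(X))$. You also spell out two steps the paper states without comment: $B^\top B = I$, which makes the minimizer unique and equal to $B^\top b$, and $w^\top x = \|r\|$.
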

\begin{proof}
    Let 
\[
v = (I-XX^\top) \, x \ / \ \|(I-XX^\top) \, x\|,
\]
so that $v^\top X = 0$ and $v^\top x \ne 0$. Note that $v$ is well-defined because $x \not \in \text{col}(X)$. Then, define 
\[
B = Q \, (I - 2 \, vv^\top).
\]
By construction,  $BX = QX$ and $\kappa(B) = 1$. Since $y$ is the least squares solution to $Bz = b$, we find that 
\[
y = B^\top b = (I - 2 \, vv^\top) \, Q^\top b = (I - 2 \, vv^\top) \, x.
\]
Thus, $y-x = - 2 \, vv^\top x$ and
$
\|y - x\| = 2 \, |v^\top x| =  2 \cdot \text{dist}(x, \, \text{col}(X)).
$
\end{proof}
In the following remark, we explain that the assumption that $x \in \text{col}(X)$ in~\Cref{prop:ls-nonuniqueness} is necessary to for non-identifiability.
\begin{remark} \rm
In \Cref{prop:ls-nonuniqueness}, it is necessary that $x \not \in \text{col}(X)$. If $x \in \text{col}(X)$, $x$ actually \textit{is} identifiable from only $q$ forward matvecs in the case that $b \in \text{col}(Q)$. This corresponds to the situation where one got  lucky and queried $Q$ with vectors whose span contains $x$. Algorithmically, after performing $q$ matrix-vector products to obtain $QX$, one would observe that $b$ is a linear combination of the output vectors, i.e., $b=\sum_{i =1 }^q \alpha_i Qx_i = Qx \implies x = \sum_{i = 1}^q \alpha_i x_i$  by linearity.
\end{remark}

Thus, even in the benign orthonormal setting, forward access to the design matrix does not determine the least squares minimizer. The obstruction again comes from an unseen direction: one can perturb the problem without changing any queried products, while moving the solution and keeping the matrix well conditioned. The distance between the solution $x$ and the query space $\text{col}(X)$  determines the identifiability of the least squares solution.

\section{Conclusions}\label{sec:conclusions}
We have shown that one-sided access to a matrix is often substantially weaker than standard matrix-free access with both $A$ and $A^\top$ available. On the pathology side, Arnoldi projected norms can behave arbitrarily, so forward-only Krylov information does not by itself provide a trustworthy proxy for the spectral norm. On the complexity side, near-optimal low-rank approximation requires $n$ forward matvecs, while Frobenius norm estimation requires $\Omega(\eps^{-2})$ queries when $n$ is sufficiently large. On the identifiability side, solutions to the local maximum volume submatrix problem and orthonormal least squares are not determined by forward transcripts alone. Taken together, these results suggest that transpose access is not merely a technical convenience, but a structural source of information.

This does not necessarily mean that the TF setting is hopeless. Rather, it suggests that when TF algorithms succeed, there must be some additional structure in the matrix at hand. For example, while adjoint-free operator learning may be impossible or highly inefficient for general operators, solution operators of elliptic PDEs provide indirect information about the adjoint through forward queries~\cite{boulle2024operator}. In other cases, one may have a priori knowledge about the operator's row space~\cite{cortinovis2026adaptive}, or noisy interactions with the transpose operator in the ``unmatched transpose'' setting. Moreover, our experiments in~\Cref{fig:1} and ~\Cref{fig:2} demonstrate that for matrices exhibiting singular value decay or arising from PDEs, TF methods can even outperform ones which use the transpose. Thus, a natural continuation of the present work is to move beyond worst-case analyses and understand the performance of TF methods for  structured matrices that commonly arise in practice.

Many other natural directions remain open for exploration. Are there stronger lower bounds for Schatten-$p$ norm estimation beyond the rank-one reduction used in~\Cref{cor:schatten-rank-one}? In particular, can one extend the $p$-dependent lower bounds in~\cite{linguyenwoodruffon2014} to adaptive sketches? What is the correct TF complexity of singular-vector or nullspace estimation?
Moreover, to what extent can analogous impossibility results be established for regularized least squares problems or other structured inverse problems? We hope that the present work provides a starting point for these deeper questions in TF linear algebra.

\section*{Acknowledgments}
D.~H.~thanks the Simons Institute for the Theory of Computing for hosting her as a research fellow in the Fall 2025 program on Complexity and Linear Algebra. She also wishes to acknowledge many useful conversations with Cameron and Chris Musco on lower bound techniques and Pinsker's inequality. A.~T.~has been supported by the Defense Advanced Research Projects Agency (DARPA) through The Right Space (TRS) Disruption Opportunity (DARPA-PA-24-04-07). The views, findings, and conclusions expressed in this paper are those of the authors and do not necessarily reflect the official policy or position of DARPA, the U.S.~Department of Defense, or the U.S.~Government. A.~T.~has also been supported by the National Science Foundation CAREER grant DMS-2045646.

\footnotesize
\bibliographystyle{abbrv}
\bibliography{tf}

\end{document}